\newcommand{\refthm}[1]{{Theorem \ref{#1}}}
\newcommand{\refeqn}[1]{{(\ref{#1})}}
\newcommand{\refalg}[1]{Algorithm \ref{#1}}
\newcommand{\refsec}[1]{{section \ref{#1}}}
\newcommand{\reftab}[1]{{Table \ref{#1}}}
\newcommand{\refprop}[1]{{Proposition \ref{#1}}}
\newcommand{\reflem}[1]{{Lemma \ref{#1}}}
\newtheorem{theorem}{Theorem}[section]
\newtheorem{lemma}[theorem]{Lemma}
\newtheorem{proposition}[theorem]{Proposition}
\newtheorem{corollary}[theorem]{Corollary}
\theoremstyle{remark}
\newtheorem{remark}[theorem]{Remark}
\newtheorem{example}[theorem]{Example}
\newcommand{\C}{\mathbb{C}}
\newcommand{\Z}{\mathbb{Z}}
\newcommand{\Pj}{\mathbb{P}}
\newcommand{\R}{\mathbb{R}}
\newcommand{\F}{\mathbb{F}}
\newcommand{\tensor}[1]{\mathfrak{#1}}
\newcommand{\Var}[1]{\mathcal{#1}}
\newcommand{\vect}[1]{\mathbf{#1}}
\newcommand{\sten}[3]{\vect{#1}_{#2}^{#3}}
\newcommand{\Sec}[2]{\sigma_{#1}({#2})}
\newcommand{\Tang}[2]{\mathrm{T}_{#1} {#2}}
\newcommand{\Plane}[1]{\mathrm{#1}}
\newcommand{\ktimes}{\otimes} 
\newcommand{\ttimes}{\times}  
\title[An algorithm for generic and specific identifiability]{An algorithm for generic and low-rank specific identifiability of complex tensors}
\author{Luca Chiantini, Giorgio Ottaviani,  Nick Vannieuwenhoven}
\begin{document}

\begin{abstract}
We propose a new sufficient condition for verifying whether generic rank-$r$ complex tensors of arbitrary order admit a unique decomposition as a linear combination of rank-$1$ tensors. A practical algorithm is proposed for verifying this condition, with which it was established that in all spaces of dimension less than $15000$, with a few known exceptions,  listed in the paper, generic identifiability holds for ranks up to one less than the generic rank of the space. This is the largest possible rank value for which generic identifiability can hold, except for spaces with a perfect shape. The algorithm can also verify the identifiability of a given specific rank-$r$ decomposition, provided that it can be shown to correspond to a nonsingular point of the $r$th order secant variety. For sufficiently small rank, which nevertheless improves upon the known bounds for specific identifiability, some local equations of this variety are known, allowing us to verify this property. As a particular example of our approach, we prove the identifiability of a specific $5\times 5\times 5$ tensor of rank $7$, which cannot be handled by the conditions recently provided in [I.~Domanov and L.~De Lathauwer, \textit{On the Uniqueness of the Canonical Polyadic Decomposition of third-order tensors---Part II: Uniqueness of the overall decomposition}, SIAM J.~Matrix Anal.~Appl.~34(3), 2013]. Finally, we also present a surprising new class of weakly-defective Segre varieties that nevertheless turns out to admit a generically unique decomposition.
\end{abstract}

\maketitle




\pagestyle{myheadings}
\thispagestyle{plain}
\markboth{CHIANTINI, OTTAVIANI, VANNIEUWENHOVEN}{GENERIC AND SPECIFIC IDENTIFIABILITY OF TENSORS}

\section{Introduction}
A tensor, which can be represented by a multidimensional array $\tensor{A} \in \C^{n_1 \times{} n_2 \times{} \cdots \times{} n_d}$ in fixed bases, where we assume without loss of generality that $n_1 \ge n_2 \ge \ldots \ge n_d$, is said to admit a \emph{rank-$r$ decomposition} whenever
\begin{align} \label{eqn:cpd}
 \tensor{A} = \sum_{i=1}^{r} \sten{a}{i}{1} \ktimes \sten{a}{i}{2} \ktimes \cdots \ktimes \sten{a}{i}{d},
\quad\text{with } \sten{a}{i}{\ell} \in \C^{n_\ell},\; \ell = 1, \ldots, d,
\end{align}
and where $\ktimes$ denotes the tensor product. In the above, $r$ is assumed to be minimal in the sense that no other decomposition of the above form with fewer terms exists: we say that the rank of $\tensor{A}$ is $r$. This general decomposition was introduced by Hitchcock \cite{Hitchcock1927aCOV,Hitchcock1927COV} and was later rediscovered several times, notably by Caroll and Chang \cite{Caroll1970COV}, who called it Candecomp, and by Harschman \cite{Harshman1970COV}, who called it Parafac. For this reason, the decomposition is also often called \emph{the CP decomposition}.

Essential uniqueness, or \emph{identifiability}, of the decomposition in \refeqn{eqn:cpd}, up to trivial indeterminacies, is one of its key properties in practice. According to Smilde, Bro, and Geladi \cite{Smilde2004COV}, the rank decomposition is nowadays widely used in chemistry, where it finds application in recovering the emission-excitation spectra of chemical components in a multicomponent fluorescent mixture. This idea was introduced in 1981 by Appellof and Davidson \cite{Appellof1981COV} who stated that ``the advantage of having a three-dimensional data matrix [relative to analyzing only two-dimensional excitation-emission matrices] is that if the factorization is found, it is unique.'' In a different context, according to Allman, Matias, and Rhodes \cite{Allman2009COV}, the identifiability of statistical models of type \refeqn{eqn:cpd} ``is a prerequisite of statistical parameter inference.'' 

Notwithstanding very substantial interest in identifiability \cite{Harshman1970COV,Kruskal1977COV,Strassen1983COV,ChCi2001COV,tBS2002COV,JS2004COV,EHN2005COV,ChCi2006COV,SS2007COV,Allman2009COV,Rhodes2010COV,CO2012COV,BCh2013COV,BCO2013COV,Domanov2013aCOV,Domanov2013bCOV,Bhaskara2013COV}, its theoretical foundations
are still not completely understood. A well-known condition for \emph{specific identifiability}---given a rank-$r$ decomposition, determine whether it is unique---was introduced by
Kruskal in \cite{Kruskal1977COV}. Letting $A^j = \begin{bmatrix} \sten{a}{l}{j}
\end{bmatrix}_{l=1}^{r}$, $j=1,2,3$, Kruskal's condition states that if 
\begin{equation}\label{eq:kruskalrange}
r \le \tfrac{1}{2} \left( k_{A^1} + k_{A^2} + k_{A^3} - 2 \right),
\end{equation}
where $k_{A^j}$ is the maximum number such that every set of $k_{A^j}$ columns of $A^j$
is linearly independent, then the decomposition given in \refeqn{eqn:cpd} is unique.
In addition to the question of specific identifiability, the condition also yields results about 
\emph{generic identifiability}---determine whether all rank-$r$ decompositions not in some 
set of measure zero are unique. From Kruskal's condition it follows that a generic 
rank-$r$ decomposition is unique if 
\[
 r \le \tfrac{1}{2} \left(\min(n_1,r) + \min(n_2,r) + \min(n_3,r) - 2\right).
\]
In the $n\times n\times n$ case, the above condition reduces to 
\(
 r \le \tfrac{1}{2} \left(3n - 2\right).
\)
It has been known since the work of Strassen \cite{Strassen1983COV} that Kruskal's condition, as well as the recent conditions by Domanov and De Lathauwer \cite[Table 6.2]{Domanov2013bCOV}, are quite weak for addressing \emph{the problem of generic identifiability}, at least for such cubic tensors. Strassen proved in \cite[Corollary 3.7]{Strassen1983COV} 
that a generic rank-$r$ decomposition in $\C^n \times \C^n \times \C^n$, $n$ odd, is unique 
whenever
\[
 r \le \left\lfloor\frac{n^3}{3n - 2} \right\rfloor -n,
\]
which is asymptotically better than Kruskal's condition by a factor $n$. This result
was recently extended to any $n$ in \cite[Corollary 6.2]{BCO2013COV}. 

We will investigate the identifiability of rank decompositions using techniques from algebraic geometry in this paper. Its language and terminology will be used, while attempting to maintain an exposition that requires no specialist knowledge. Before proceeding, some basic terminology is established. Recall from \cite{Landsberg2012COV} 
that a point $p_i \in \Var{S}$ on the Segre variety $\Var{S} = \Pj\C^{n_1}
\ttimes 
\Pj\C^{n_2} \ttimes \cdots \ttimes \Pj\C^{n_d}$ embedded in $\Pj\C^{n_1 n_2
\cdots 
n_d}$ can be parametrized by the tensors of rank $1$: we shall write $p_i = \sten{a}{i}{1} \ktimes \sten{a}{i}{2} \ktimes \cdots \ktimes \sten{a}{i}{d}$, with a slight abuse of notation, where $p_i$ is literally a representative
of the point, up to scalar multiples.\footnote{We also refer to \cite[Section 4.2]{Landsberg2012COV} for basic definitions in projective algebraic geometry.} 
A rank-$r$ decomposition is a linear combination of $r$
points $p_i \in \Var{S}$, where the number of summands $r$ is minimal. Geometrically, every rank-$r$ decomposition conforms to a point $p \in \Sec{r}{\Var{S}}$ on the 
\emph{$r$-secant variety} $\Sec{r}{\Var{S}}$ of the Segre variety
$\Var{S}$, which is defined as the closure in the Zariski topology of the set of linear combinations
of $r$ points on $\Var{S}$. Note that not every $p \in \Sec{r}{\Var{S}}$ has rank $r$, a situation arising from taking the closure in the Zariski or Euclidean topology, and which may lead to the \emph{ill-posedness} of the standard approximation problem associated with \refeqn{eqn:cpd}; see de Silva and Lim \cite{Silva2008COV} for more details in this regard. A Segre variety $\Var{S}$ is said to be \emph{generically $r$-identifiable} if a general element
of $\Sec{r}{\Var{S}}$ admits a unique representation as a linear combination of
points in $\Var{S}$; i.e., the representation in \refeqn{eqn:cpd} is unique up to the trivial scaling indeterminacies that arise when considering the rank decomposition in an affine setting. In other words, if $\Var{S}$ is generically $r$-identifiable, then there exists a set $M$ of Zariski, and, thus, Euclidean, measure zero, such that all elements of $\sigma_r(\Var{S}) \setminus M$ are $r$-identifiable. In particular, if we sample a ``random'' element on $\sigma_r(\Var{S})$, imposing any reasonable continuous probability distribution, then this element will be identifiable. Furthermore, and conversely, if $\Var{S}$ is generically $r$-identifiable and we have a nonidentifiable element $p \in \sigma_r(\Var{S})$, then there will exist, for every $\epsilon > 0$, points $p' \in \sigma_r(\Var{S})$ with $\|p-p'\| \le \epsilon$ and $p'$ $r$-identifiable, where the norm is the Euclidean norm. \emph{Nonidentifiable points are, thus, in a sense, nonstable points of a generically $r$-identifiable Segre variety $\Var{S}$}; a general infinitismal perturbation, on $\sigma_r(\Var{S})$, will make them $r$-identifiable.

In this paper, a new sufficient condition for generic 
identifiability is developed based on the geometrical concept of tangential 
weak defectivity, extending \cite{CO2012COV,BCO2013COV}. As the condition is more
involved to verify, an algorithm, based on familiar linear and multilinear operations, for 
testing the proposed condition is described in some detail. 
As generic $(r-1)$-identifiability is implied by generic
$r$-identifiability \cite{CO2012COV}, 
the application of this algorithm for the problem of generic identifiability will be limited to the largest $r$ possible, 
which is one less than the expected generic rank:
\[
\underline{r} = \left\lceil \frac{\Pi_{i=1}^{d} n_i}{1 + \Sigma_{i=1}^{d} (n_i -
1)} 
\right\rceil - 1;
\]
however, if the fraction is integer, then $\underline{r}$ should be one larger
than stated. In this case, one says that the Segre variety $\Var{S}$ has a perfect shape
\cite{Strassen1983COV, Lickteig1985COV}. Unfortunately, the proposed algorithm is not designed to handle the
$(\underline{r} + 1)$-secant in the case of perfect shapes. Our investigation will therefore be limited 
to $\underline{r}$ for all Segre varieties. We will say that tensors of rank $r\le \underline{r}$ are of \emph{subgeneric rank}. Remark that generic $r$-identifiability does not hold for $r$ strictly larger than $\underline{r}$, respectively $\underline{r}+1$ for perfect shapes, as is well known \cite[Proposition 3.3.1.2]{Landsberg2012COV}.

In \cite{BCO2013COV}, a list of all known cases where generic identifiability fails 
is presented. Using the proposed algorithm, we verified generic
$r$-identifiability for a large number of complex tensor spaces, providing additional evidence that
the list from \cite{BCO2013COV} is complete for the varieties tested. 
The main result we prove is:
\begin{theorem}\label{main1}
A generic tensor $\tensor{A} \in \C^{n_1 \times{} n_2 \times{} \cdots \times{}
n_d}$ 
of subgeneric rank $r\le \underline{r}$ is $r$-identifiable if
$\prod_{i=1}^dn_i\le 15000$ 
unless we have one of the following:
{\small
$$\begin{array}{ccl}
\toprule
(n_1,\ldots,n_d) & r & \mbox{type} \\
\midrule
(4,4,3) & 5 &\mbox{defective \cite{AOP2009COV}}\\
(4,4,4) & 6 &\mbox{sporadic \cite{CO2012COV}} \\ 
(6,6,3) & 8 &\mbox{sporadic \cite{CMOCOV}} \\
(n,n,2,2) & 2n-1 &\mbox{defective \cite{AOP2009COV}} \\
(2,2,2,2,2) & 5 &\mbox{sporadic \cite{BCh2013COV}} \\
n_1 > \prod_{i=2}^dn_i-\sum_{i=2}^d(n_i-1) & r\ge 
\prod_{i=2}^dn_i-\sum_{i=2}^d(n_i-1) & \mbox{unbalanced  \cite{BCO2013COV}} \\
\bottomrule
\end{array}$$
}
\end{theorem}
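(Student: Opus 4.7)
The plan is to combine a new algebro-geometric sufficient condition for generic identifiability with an exhaustive algorithmic verification across all multi-indices $(n_1,\ldots,n_d)$ with $\prod_i n_i \le 15000$. First I would establish, in the spirit of \cite{CO2012COV,BCO2013COV}, the theoretical criterion: generic $r$-identifiability of the Segre variety $\Var{S}$ is implied by the statement that the $r$-tangential contact locus at $r$ general points $p_1,\ldots,p_r \in \Var{S}$ reduces set-theoretically to $\{p_1,\ldots,p_r\}$. Such a criterion converts the geometric question ``is the abstract $r$-secant map generically injective?'' into a numerical question about the span of tangent spaces $\Tang{p_i}{\Var{S}}$ inside $\Pj\C^{n_1\cdots n_d}$, together with a check that no additional point of $\Var{S}$ lies in a suitable auxiliary linear space.

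Next I would describe the algorithm implementing this test. After sampling $r$ random rank-one tensors $p_1,\ldots,p_r$, one assembles a matrix whose columns span $\sum_{i=1}^r \Tang{p_i}{\Var{S}}$ (using the Kronecker structure of tangent vectors to $\Var{S}$ to keep the construction cheap), then verifies by a sequence of rank computations that the only rank-one tensors inside this span, modulo the $p_i$, are the $p_i$ themselves. Since every condition involved is Zariski-open, a single successful evaluation at a random point already certifies the generic statement; to make this rigorous I would run the arithmetic in $\F_p$ for a sufficiently large prime, so that the computed ranks are lower bounds for the generic ranks, which is all that is needed for the identifiability conclusion.

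The third step is the enumeration. I would list all tuples $n_1 \ge n_2 \ge \cdots \ge n_d \ge 2$ with $\prod_i n_i \le 15000$, compute $\underline{r}$ from its formula, and apply the algorithm at $r=\underline{r}$ for each tuple. Tuples falling in the unbalanced regime with $\underline{r}\ge \prod_{i\ge 2} n_i - \sum_{i\ge 2}(n_i-1)$ are excluded at the outset by \cite{BCO2013COV}; the remaining listed exceptions $(4,4,3;5)$, $(4,4,4;6)$, $(6,6,3;8)$, $(n,n,2,2;2n-1)$ and $(2,2,2,2,2;5)$ are already known to fail generic identifiability through the cited references, so their appearance as failures of the algorithm is a consistency check rather than something to prove. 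For every other tuple, a successful algorithmic verification establishes generic $\underline{r}$-identifiability, and the descent implication ``generic $r$-identifiability $\Rightarrow$ generic $(r-1)$-identifiability'' from \cite{CO2012COV} extends the conclusion to all subgeneric ranks $r \le \underline{r}$.

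The main obstacle is twofold. On the theoretical side, one must ensure that the chosen sufficient condition actually captures all the non-exceptional cases in the range; if the condition were too conservative, genuine cases of identifiability would be flagged as unverified and the theorem would be strictly weaker. On the computational side, while each individual rank check is straightforward, the number of admissible tuples with $\prod n_i \le 15000$ is in the thousands and the largest associated matrices have roughly $15000$ rows, so the implementation must exploit the Kronecker-factored sparsity of tangent vectors, reuse factorizations, and be organized so that the finite-field certificates can be reproduced and independently audited. Assuming both obstacles are overcome, the theorem follows by the disjunction ``algorithm succeeds $\Rightarrow$ identifiability holds; algorithm fails only on the listed tuples, which are known exceptions.''
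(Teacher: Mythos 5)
Your overall strategy---a tangential-weak-defectivity criterion, a randomized finite-field rank test, exhaustive enumeration over $\prod_i n_i \le 15000$, and descent from $\underline{r}$ to lower ranks---is indeed the skeleton of the paper's proof. But there is a concrete gap exactly where you flag ``the main obstacle'': the sufficient condition you propose (the $r$-tangential contact locus reduces set-theoretically to $\{p_1,\ldots,p_r\}$) is provably \emph{false} for a whole class of shapes inside the stated range, and your proposal has no mechanism to handle them. Whenever $n_1 - 1 > \ell\sum_{i=2}^d(n_i-1)$ with $\ell = \Pi - \underline{r}(\Sigma+1)$ (smallest instance $\Pj\C^{8}\times\Pj\C^{3}\times\Pj\C^{3}\times\Pj\C^{2}$, so $\Pi=144\le 15000$), the contact locus necessarily contains a positive-dimensional linear space through each $p_i$, so the variety \emph{is} $\underline{r}$-tangentially weakly defective and your test can never succeed there. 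This happens for $973$ of the tested shapes. The paper closes this with a separate argument (\refthm{th:fakeareok}): when the stacked Hessian has the precise submaximal rank $(\ell+1)\sum_{i\ge2}(n_i-1)$, the contact locus locally coincides with linear spaces $L_i\subset \Tang{p_i}{\Var{S}}$ spanning a space of maximal dimension, which still forces uniqueness. Without this extra theorem, your disjunction ``algorithm succeeds or the tuple is a listed exception'' fails, and the theorem you could prove would carry $973$ additional unresolved cases.

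A secondary issue: checking that ``the only rank-one tensors inside this span, modulo the $p_i$, are the $p_i$ themselves'' is a global, non-linear condition that cannot be certified by rank computations alone. The paper's essential reduction is \refprop{infinitepoints}: if identifiability fails, the contact locus contains a \emph{curve} through $p_1$, so it suffices to certify that the contact locus is zero-dimensional \emph{at} $p_1$, which is exactly what the maximal rank of the stacked Hessian (the Jacobian of the contact-locus equations at $p_1$) detects. You need this local-to-global step, or some substitute for it, to turn your criterion into something a linear-algebra algorithm can actually verify.
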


The theorem was stated with $100$ instead of $15000$ in \cite{BCO2013COV}.\footnote{The defective varieties $\Pj\C^{n} \times \Pj\C^n \times \Pj\C^3$, $n$ odd, appear to be missing relative to \cite{BCO2013COV}, however, that is because they are defective only in the $(\underline{r}+1)$-secant.} With the exception of the perfect shapes, \emph{these results are optimal} in the sense that generic $(\underline{r}+1)$-identifiability does not hold.\footnote{As a corollary, this also proves nondefectivity of the $\underline{r}$-secant variety of these Segre varieties, providing further evidence for the Abo--Ottaviani--Peterson conjecture \cite{AOP2009COV}, which already received a strong numerical confirmation in \cite{Vannieuwenhoven2012cCOV}.}

The algorithm presented in this paper allows us to treat a considerably larger number of cases, yielding
results we believe to be of practical relevance, because of an additional result that is implied by \refthm{preservesuniqueness} in \refsec{spec}:

\begin{corollary}
A generic tensor $\tensor{A} \in \C^{n_1 \times \cdots \times n_d}$ of multilinear rank $(r_1, \ldots, r_d)$ and of subgeneric rank $r \le \underline{r}$ in $\C^{r_1 \times \cdots \times r_d}$, i.e., 
\[
\underline{r} = \left\lceil \frac{ \prod_{i=1}^d r_i}{1 + \sum_{i=1}^d ( r_i - 1)} \right\rceil - 1,
\]
is $r$-identifiable if $\prod_{i=1}^d r_i \le 15000$.
\end{corollary}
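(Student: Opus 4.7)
The plan is a reduction via Tucker compression to the setting already handled by Theorem \ref{main1}. Recall that a tensor $\tensor{A} \in \C^{n_1 \times \cdots \times n_d}$ of multilinear rank exactly $(r_1,\ldots,r_d)$ admits a Tucker decomposition
\[
\tensor{A} = (U_1,\ldots,U_d)\cdot \tensor{S},
\qquad U_i \in \C^{n_i \times r_i},\; \tensor{S} \in \C^{r_1\times\cdots\times r_d},
\]
with each $U_i$ of full column rank; moreover, $\tensor{A}$ and the core tensor $\tensor{S}$ have the same CP rank, and every rank-$r$ decomposition of $\tensor{A}$ is the image under $(U_1,\ldots,U_d)$ of a rank-$r$ decomposition of $\tensor{S}$, and vice versa (this is the content of \refthm{preservesuniqueness} in \refsec{spec}).

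First, I would note that as $U_1,\ldots,U_d$ range over matrices of full column rank and $\tensor{S}$ ranges over $\C^{r_1\times\cdots\times r_d}$, the resulting $\tensor{A}$ fills a dense open subset of the variety of tensors of multilinear rank $(r_1,\ldots,r_d)$ in $\C^{n_1\times\cdots\times n_d}$. In particular, a generic $\tensor{A}$ with this multilinear rank arises from a generic core tensor $\tensor{S}$ in $\C^{r_1\times\cdots\times r_d}$.

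Second, since by hypothesis $\prod_{i=1}^d r_i \le 15000$ and $r\le\underline{r}$, with $\underline{r}$ the subgeneric bound computed from the $r_i$, I would apply \refthm{main1} directly to the ambient space $\C^{r_1\times\cdots\times r_d}$: outside the exceptional list in that theorem, a generic rank-$r$ tensor $\tensor{S}$ in this smaller space is $r$-identifiable.

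Third, I would invoke \refthm{preservesuniqueness} to transfer identifiability: since each $U_i$ has full column rank, it sets up a bijection between rank-$1$ summands of $\tensor{S}$ and rank-$1$ summands of $\tensor{A}$, so $r$-identifiability of $\tensor{S}$ implies $r$-identifiability of $\tensor{A}$. The main conceptual obstacle is step one—making precise that a \emph{generic} tensor of fixed multilinear rank $(r_1,\ldots,r_d)$ in the large space really does correspond to a \emph{generic} core in the small space, rather than a core confined to some proper subvariety. This is handled by observing that the Tucker parametrization $(U_1,\ldots,U_d,\tensor{S}) \mapsto (U_1,\ldots,U_d)\cdot\tensor{S}$ is dominant onto the multilinear-rank-$(r_1,\ldots,r_d)$ locus with generically irreducible fibers (the $\mathrm{GL}(r_i)$-gauge freedom), so the preimage of a measure-zero set in the multilinear-rank locus projects to a measure-zero set of cores, and the identifiability property passes through this correspondence.
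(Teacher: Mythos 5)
Your proposal is correct and follows essentially the same route as the paper, which derives the corollary by combining \refthm{main1} applied to the core tensor in $\C^{r_1\times\cdots\times r_d}$ with the transfer of identifiability through multilinear multiplication by full-column-rank factors given in \refthm{preservesuniqueness}. Your explicit justification that a generic tensor of the stated multilinear and CP rank arises from a generic rank-$r$ core (via dominance of the Tucker parametrization), and your caveat about the exceptional cases inherited from \refthm{main1}, are both appropriate and consistent with the paper's (largely implicit) argument.
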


In addition to generic identifiability, we also investigate whether the algorithm can be extended to handle the problem of specific identifiability. We will show that if a specific rank-$r$ decomposition, considered as a point on the $r$-secant variety of a Segre variety, is nonsingular, then the algorithm for generic identifiability may be applied. Unfortunately, little is known about the singularities of these varieties; nonetheless, local equations for secant varieties of low order can be obtained, allowing us to propose a test for nonsingularity of a given rank-$r$ decomposition. This technique allows to handle specific tensors that cannot be covered by the criterions of Kruskal and Domanov--De Lathauwer. In particular, we propose a specific example, in \refsec{sec_specific_identifiability}, of a $5\times 5\times 5$ tensor of rank $7$ that is proved to be identifiable.

{The remainder of the paper is structured as follows. In \refsec{sectionalgtesting}, a sufficient condition for generic $r$-identifiability is proposed, and a new class of identifiable but weakly-defective secant varieties is presented. Section \ref{thealgorithm} investigates an algorithm based on the proposed sufficient condition; \refthm{main1} is proved. A sufficient condition for specific $r$-identifiability is then proposed in \refsec{spec}. This condition is used in \refsec{sec_specific_identifiability} in combination with local equations for the $r$-secant variety to prove identifiability of a specific example beyond the criterions of Kruskal and Domanov--De Lathauwer. Finally, \refsec{sec_conclusions} presents our conclusions and open questions.}

\paragraph{Notational conventions}
We denote by $\mathrm{T}_{p} X$ the tangent space to an algebraic variety $X \subset \Pj\C^N$ in $p \in X$. We let
\[
 \Var{S} = \Pj\C^{n_1} \times \Pj\C^{n_2} \times \cdots \times \Pj\C^{n_d},
\quad n_1 \ge n_2 \ge \cdots \ge n_d,
\]
be the Segre variety under study, and define furthermore the constants 
\[
 \Pi = \prod_{i=1}^d n_i,
\quad
\Sigma = \sum_{i=1}^d (n_i - 1)
\quad\mbox{and}\quad
\underline{r} = \left\lceil \frac{\Pi}{1+\Sigma} \right\rceil - 1.
\]
Note that $\Var{S}$ has dimension $\Sigma$ and is naturally embedded in $\Pj\C^{\Pi}$. The $r$-secant variety of $\Var{S}$ is formally given by
\[
 \Sec{r}{\Var{S}} = \overline{ \bigcup_{p_1, \ldots, p_r \in \Var{S}} \langle p_1, p_2, \ldots, p_r \rangle } \subset \Pj\C^\Pi,
\]
where the line denotes the Zariski closure. The linear span of the spaces $L_i \subset V$, $i=1,\ldots,k$, is denoted by $\langle L_1, \ldots, L_k \rangle \subset V$.

\section{A sufficient condition for generic identifiability}\label{sectionalgtesting}
A symbolic algorithm implemented in Macaulay2 for verifying whether a generic rank-$r$ tensor is identifiable was sketched in \cite{BCO2013COV}. In essence, it augments the well known algorithm based on Terracini's lemma for verifying nondefectivity of the $r$-secant variety of a Segre variety, see, e.g., \cite{Abo2010COV,Vannieuwenhoven2012cCOV}, with an additional step verifying, essentially, that no other points on the Segre variety have their tangent space contained within the tangent space spanned by $r$ generic points on the Segre variety. In this section, we expound upon the correctness of the algorithm in \cite[Section 9]{BCO2013COV}, and present a sufficient condition for generic $r$-identifiability based entirely on basic linear algebra. 

The starting point of our investigation is Terracini's characterization of the tangent space at a general point on the $r$-secant variety of any variety \cite{Terracini1911COV,Zak1993COV}. We recall the result here, for we will need to refer often to the statement. It reads: 
\begin{lemma}[Terracini's lemma \cite{Terracini1911COV}] Let $\Var{S} \subset \Pj\C^\Pi$ be a Segre variety, let $p_1$, $p_2$, $\ldots$, $p_r \in \Var{S}$ be general points, and let $p \in \Sec{r}{\Var{S}}$ be general in $\langle p_1, p_2, \cdots, p_r \rangle$. Then, 
\[
 \mathrm{T}_p \Sec{r}{\Var{S}} = \langle \mathrm{T}_{p_1} \Var{S},
\mathrm{T}_{p_2} \Var{S}, 
 \ldots, \mathrm{T}_{p_r} \Var{S} \rangle;
\]
that is, the tangent space to the $r$-secant variety in $p$ is given by the linear span of the tangent spaces to the Segre variety in each of the $r$ points.
\end{lemma}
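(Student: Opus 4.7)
The plan is to parametrize $\Sec{r}{\Var{S}}$ by the natural join map and then compare the image of the differential with the tangent space to the image, invoking generic smoothness in characteristic zero. Concretely, I would introduce the rational map
$$\phi : \Var{S}^r \ttimes \Pj\C^{r} \dashrightarrow \Pj\C^\Pi, \qquad (p_1, \ldots, p_r, [\lambda_1 : \cdots : \lambda_r]) \longmapsto \sum_{i=1}^r \lambda_i \tilde{p}_i,$$
where $\tilde{p}_i$ denotes any affine lift of $p_i$. By definition, $\Sec{r}{\Var{S}}$ is the Zariski closure of the image of $\phi$. Since $p_1, \ldots, p_r$ are general points and $p$ is general in their span, I can assume that all $\lambda_i$ are nonzero and that the chosen preimage of $p$ sits in a dense open subset on which $\phi$ is regular.

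Next I would compute the differential $d\phi$ at this preimage. Tangent vectors at the source decompose into two kinds of contributions: (a) for each $i$, an infinitesimal motion $v_i \in \Tang{p_i}{\Var{S}}$ of the point $p_i$, contributing $\lambda_i v_i$ to the image; and (b) an infinitesimal variation of the $\lambda_i$, contributing a vector in $\langle p_1, \ldots, p_r \rangle$. Because every point $p_i$ of an embedded projective variety is contained in its own embedded tangent space $\Tang{p_i}{\Var{S}}$, the type-(b) contributions already lie in $\langle \Tang{p_1}{\Var{S}}, \ldots, \Tang{p_r}{\Var{S}} \rangle$. Since each $\lambda_i$ is nonzero, the type-(a) contributions sweep out each $\Tang{p_i}{\Var{S}}$ as $v_i$ varies independently. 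Hence $\mathrm{Im}(d\phi) = \langle \Tang{p_1}{\Var{S}}, \ldots, \Tang{p_r}{\Var{S}} \rangle$, and since the image of the differential is always contained in the tangent space to the image, this already gives the inclusion $\langle \Tang{p_1}{\Var{S}}, \ldots, \Tang{p_r}{\Var{S}} \rangle \subseteq \Tang{p}{\Sec{r}{\Var{S}}}$.

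The reverse inclusion is the main obstacle: one needs to know that $\mathrm{Im}(d\phi)$ actually exhausts the tangent space of the image at the general point $p$. I would settle this by invoking generic smoothness in characteristic zero: the map $\phi$ is dominant onto $\Sec{r}{\Var{S}}$, hence its differential attains constant rank equal to $\dim \Sec{r}{\Var{S}}$ on a Zariski-dense open subset of the source, and on this subset the image of $d\phi$ coincides with the tangent space of the target at $\phi(\cdot)$. The genericity hypotheses on $p_1, \ldots, p_r$ and on $p$ place us inside this open locus, which upgrades the inclusion above to the desired equality and completes the proof.
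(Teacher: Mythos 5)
The paper does not prove this statement at all: Terracini's lemma is recalled as a classical result and attributed to \cite{Terracini1911COV,Zak1993COV}, so there is no in-paper argument to compare yours against. Judged on its own, your proof is the standard characteristic-zero argument and is essentially sound: the differential of the join parametrization has image $\langle \Tang{p_1}{\Var{S}}, \ldots, \Tang{p_r}{\Var{S}} \rangle$ (using that each $p_i$ lies in its own embedded tangent space to absorb the type-(b) contributions), and generic smoothness of a dominant morphism from a smooth source over $\C$ upgrades the containment to equality at a general point. You correctly isolate generic smoothness as the crux; this is exactly where the lemma fails in positive characteristic.

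One point deserves tightening. The map $\phi$ as written on $\Var{S}^r \ttimes \Pj\C^{r}$ is not well defined, because $\sum_i \lambda_i \tilde{p}_i$ depends on the choice of affine lifts $\tilde{p}_i$, and there is no consistent global choice over all of $\Var{S}^r$. The clean fixes are either (i) to work with the affine cones, where the join map is simply $(x_1,\ldots,x_r) \mapsto x_1 + \cdots + x_r$ and the $\lambda_i$ are absorbed into the scaling of the $x_i$, with differential $(v_1,\ldots,v_r)\mapsto v_1+\cdots+v_r$ whose image is visibly the span of the affine tangent cones; or (ii) to replace the source by the abstract secant variety, i.e., the incidence correspondence of pairs $((p_1,\ldots,p_r),p)$ with $p \in \langle p_1,\ldots,p_r\rangle$, and take $\phi$ to be the second projection. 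The latter is in fact the formulation the paper itself adopts later (in \refsec{spec}, where the abstract secant variety and the projection $\pi_r$ are used to prove Lemma \ref{terraciniholds}), so adopting it here would also align your proof with the paper's subsequent machinery. With that repair, the argument is complete.
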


By definition, a generic rank-$r$ tensor with $r \le \underline{r}$ in $\Pj\C^{n_1 \times \cdots \times n_d}$ admits a unique representation as a sum of rank-$1$ tensors if and only if the \emph{$r$-secant order} of the Segre variety $\Var{S} = \Pj\C^{n_1} \times \cdots \times \Pj\C^{n_d}$ is one \cite{ChCi2006COV}. This concept is strongly related to \emph{$r$-weak defectivity} \cite{ChCi2001COV}; a variety $\Var{A}$ is said to be $r$-weakly defective if a general hyperplane containing the tangent space at $r$ general points of $\Var{A}$ is also tangent to the variety in another point distinct from these $r$ points. It was proved in \cite{ChCi2006COV} that a variety that is not $r$-weakly defective has $r$-secant order one. In Proposition 2.4 in \cite{CO2012COV}, the notion of \emph{not $r$-tangential weak defectivity}, which entails not $r$-weak defectivity, was introduced. This is the key geometrical property that the algorithm from \cite{BCO2013COV} exploits. The proposition from \cite{CO2012COV} states:
\begin{proposition}[Chiantini and Ottaviani \cite{CO2012COV}] \label{prop:CO}
 Let $p_1, p_2, \ldots, p_r \in \Var{S}$ be $r \le \underline{r}$ particular points of a Segre variety $\Var{S} \subset \Pj\C^{\Pi}$ whose $r$-secant variety is nondefective and $p \in \Var{S}$ any point. 
Let $\mathrm{H} = \langle \Tang{p_1}{\Var{S}}, \Tang{p_2}{\Var{S}}, \ldots, \Tang{p_r}{\Var{S}} \rangle$.
If $\{ p \in \Var{S} \;|\; \Tang{p}{\Var{S}}\subseteq H\}$ consists only of the simple points $\{p_1, p_2, \ldots, p_r\}$, then the Segre variety $\Var{S}$ is $r$-identifiable.
\end{proposition}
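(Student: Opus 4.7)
The plan is to deduce $r$-identifiability from the specific tangential contact hypothesis in two stages. First, I would promote the condition from the given particular $r$-tuple to a condition holding for a generic $r$-tuple via a semicontinuity argument. Second, I would close the proof by applying Terracini's lemma to any pair of rank-$r$ decompositions of a generic point on $\Sec{r}{\Var{S}}$.

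For the semicontinuity step, I would study the projection $\pi \colon I \to \Var{S}^r$ from the incidence variety
\[
I = \bigl\{(q_1, \ldots, q_r, q) \in \Var{S}^{r+1} : \Tang{q}{\Var{S}} \subseteq \langle \Tang{q_1}{\Var{S}}, \ldots, \Tang{q_r}{\Var{S}} \rangle \bigr\}.
\]
The ``diagonal'' components of $I$, where $q$ coincides with some $q_i$, always contribute $r$ reduced points to every fiber. Any additional irreducible component of $I$ must project to a proper closed subset of $\Var{S}^r$: otherwise, since fiber dimension and cardinality of a morphism of projective varieties are upper semicontinuous, the extra component would contribute points (or positive-dimensional pieces) to fibers over a dense open, and this behaviour would persist to the special fiber over $(p_1, \ldots, p_r)$, contradicting the hypothesis. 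Consequently, there is a dense open $U \subseteq \Var{S}^r$ over which the tangential contact locus reduces to $\{q_1, \ldots, q_r\}$. The nondefectivity hypothesis on $\Sec{r}{\Var{S}}$ is used to guarantee that $\dim H$ does not jump between specific and generic tuples, so the dimensional comparison is meaningful.

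For the identifiability step, I would pick a generic $p \in \Sec{r}{\Var{S}}$ admitting a decomposition $p \in \langle p_1, \ldots, p_r\rangle$ with $(p_1, \ldots, p_r) \in U$. Suppose $p$ also lies in $\langle q_1, \ldots, q_r\rangle$ for another tuple in $\Var{S}^r$; genericity of $p$ excludes $\Sec{r-1}{\Var{S}}$, so exactly $r$ summands are needed in every decomposition. Terracini's lemma applied to the first decomposition gives $\Tang{p}{\Sec{r}{\Var{S}}} = H$, while its general inclusion direction applied to the second yields $\langle \Tang{q_1}{\Var{S}}, \ldots, \Tang{q_r}{\Var{S}} \rangle \subseteq H$. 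Hence $\Tang{q_j}{\Var{S}} \subseteq H$ for every $j$, and the tangential contact condition at $(p_1, \ldots, p_r) \in U$ forces $\{q_1, \ldots, q_r\} \subseteq \{p_1, \ldots, p_r\}$; cardinality then yields equality. I expect the main obstacle to be the semicontinuity argument: one must carefully exclude nondiagonal components of $I$ that dominate $\Var{S}^r$, which is delicate because the hypothesis is phrased set-theoretically at a single point rather than scheme-theoretically over an open subset, and the ``simple points'' clause must be used to show that the diagonal fibers appear with multiplicity one.
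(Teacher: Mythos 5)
Your proposal is essentially correct, but note that the paper does not actually prove this proposition: it is quoted from \cite{CO2012COV}, and the closest in-paper relative is the proof of \refprop{infinitepoints}. Your second stage takes a genuinely different (and more elementary) route than that proof. The paper argues by contraposition directly at generic points: given a second decomposition of a generic $p$, it perturbs the coefficients $b_i(t)$ to push $p(t)$ off $\langle p_1,\ldots,p_r\rangle$, tracks the family $p(t)=\sum a_i(t)p_i(t)$, and uses monodromy to produce a \emph{curve} in the tangential contact locus through every $p_i$; that stronger, positive-dimensional conclusion is what the authors need later for \refthm{th:fakeareok}. You instead use only the easy inclusion $\langle \Tang{q_1}{\Var{S}},\ldots,\Tang{q_r}{\Var{S}}\rangle\subseteq \Tang{p}{\Sec{r}{\Var{S}}}$ together with Terracini to land the $q_j$ in the generic contact locus, which suffices here. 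Two points in your semicontinuity stage need tightening. First, fiber \emph{cardinality} of a projective morphism is not upper semicontinuous (fibers can collapse); the correct tool is upper semicontinuity of fiber \emph{length} for the finite morphism obtained by restricting $\pi$ over a neighbourhood of $(p_1,\ldots,p_r)$ where fibers are zero-dimensional: the special fiber has length exactly $r$ because the points are simple, while the general fiber already contains $r$ distinct diagonal points, so it equals those $r$ reduced points --- this is exactly where the ``simple points'' clause does its work, as you anticipated. Second, for the special fiber of $I$ to be contained in the contact locus at $(p_1,\ldots,p_r)$ you need $\dim \mathrm{H}=r(\Sigma+1)-1$ at the \emph{particular} tuple, not merely nondefectivity of $\Sec{r}{\Var{S}}$ (which governs only generic tuples); otherwise the incidence condition is not closed near the special tuple and limits of contact points need not be contact points there. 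This is what step S5 of \refalg{alg_generic_uniqueness} verifies in practice and should be read into the hypothesis.
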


A Segre variety $\Var{S}$ is said to be \emph{not $r$-tangentially weakly defective} whenever the condition in the above proposition holds. Similar in spirit to Terracini's lemma, this proposition reduces the problem of investigating generic identifiability of an algebraic variety, which is a global property, to a local computation. 
We can reduce this check further to an infinitesimal computation that can be performed at a given point $p_1$. To this end, we recall the definition 
of \emph{$r$-tangential contact locus} $\Var{C}_r$ from
\cite{BCO2013COV}:
\begin{align} \label{eqn:contact-var-def}
 \Var{C}_r = \left\{ p \in \Var{S} \;|\; \Tang{p}{\Var{S}} \subset \Plane{H} =
\langle \Tang{p_1}{\Var{S}}, \ldots, \Tang{p_r}{\Var{S}} \rangle \right\}
\subset \Var{S} \subset \Pj\C^\Pi.
\end{align}
When no ambiguity arises, we denote $\Var{C}=\Var{C}_r$. 
The next proposition is the prime ingredient of the newly proposed sufficient condition.

\begin{proposition}\label{infinitepoints}
Let $\Var{S}$ be a nondefective Segre variety, let $r \le \underline{r}$ and assume that it is not $r$-identifiable. Then, for $r$ general points $p_1, p_2, \ldots, p_r \in \Var{S}$, the \emph{$r$-tangential contact locus} 
$\Var{C}_r$ contains a curve, passing through $p_1, p_2, \ldots, p_r $. 
\end{proposition}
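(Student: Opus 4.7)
The plan is to leverage Proposition \ref{prop:CO} --- whose contrapositive says that non-$r$-identifiability forces $\Var{C}_r(p_1, \ldots, p_r)$ to strictly contain, as a scheme, the $r$ simple points $\{p_1, \ldots, p_r\}$ --- and then upgrade this extra structure to a positive-dimensional component of $\Var{C}_r$ that meets each $p_i$.

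Concretely, I would set up the incidence correspondence
\[
I = \{(q_1, \ldots, q_r, q) \in \Var{S}^{r+1} : q \in \Var{C}_r(q_1, \ldots, q_r)\},
\]
with first projection $\pi : I \to \Var{S}^r$. It always contains the $r$ trivial diagonal components $D_i = \{(q_1, \ldots, q_r, q_i)\}$, each of dimension $r\Sigma$, and the hypothesis supplies an additional dominant irreducible component $I_0$ with $\dim I_0 \ge r\Sigma$. The key step is to show $\dim I_0 > r\Sigma$, so that the generic fiber of $\pi|_{I_0}$ is positive-dimensional --- a curve (or more) in $\Var{C}_r$ for a general $r$-tuple.

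To obtain this strict inequality, I would argue by contradiction: if the generic fiber were zero-dimensional, then the $S_r$-equivariance of $\Var{C}_r$ in $(p_1, \ldots, p_r)$, combined with Terracini's lemma applied to each alternative configuration and with nondefectivity of $\Sec{r}{\Var{S}}$ (which keeps the abstract secant map generically finite), would force $I_0$ to coincide with one of the diagonals $D_i$, contradicting Proposition \ref{prop:CO}. Once a positive-dimensional component of $\Var{C}_r$ is in hand, the same $S_r$-symmetry together with the generality of the chosen $r$-tuple guarantees that this component meets every $p_i$; a one-dimensional linear section through the $p_i$ (or the component itself, if already a curve) delivers the desired curve.

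The main obstacle is clearly the dimensional estimate $\dim I_0 > r\Sigma$. Ruling out configurations in which the extra tangency points are isolated at every generic tuple requires a careful interplay between the nondefectivity hypothesis and the explicit structure of tangent spaces to the Segre variety, so that the ``extra'' points in $\Var{C}_r$ have no choice but to fit into a continuous family rather than a finite one; everything else should fall into place from standard algebraic geometry of incidence correspondences.
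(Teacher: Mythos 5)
Your proposal correctly identifies the target (a positive-dimensional component of $\Var{C}_r$ through each $p_i$) and sets up a reasonable incidence correspondence, but it does not actually prove the one thing that needs proving. The entire content of the proposition is the dimension jump $\dim I_0 > r\Sigma$, i.e., that the extra contact points forced by non-identifiability cannot be isolated. Your proposed route to this --- ``if the generic fiber were zero-dimensional, then $S_r$-equivariance, Terracini, and nondefectivity would force $I_0$ to coincide with one of the diagonals $D_i$'' --- is asserted, not argued, and I do not see why it should hold: a dominant component with finite fibers need not be a diagonal (a priori it could be, say, the graph of the correspondence sending a general tuple $(p_1,\dots,p_r)$ to the points of a second decomposition of a general $p$ in their span). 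The contrapositive of Proposition \ref{prop:CO} only gives you that $\Var{C}_r$ strictly contains the $r$ simple points as a scheme, which is compatible with finitely many extra reduced points; ruling that out is exactly the gap you flag in your last paragraph and never close.

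The paper closes it with a concrete deformation rather than a dimension count on $I$. Writing $p=\sum a_ip_i=\sum b_iq_i$ with some $q_i\notin\{p_1,\dots,p_r\}$, one moves $p$ along a curve $p(t)=\sum b_i(t)q_i$ \emph{inside the span of the second decomposition}; Terracini's lemma keeps $\Tang{p(t)}{\Sec{r}{\Var{S}}}=\Plane{H}$ constant, the generalized trisecant lemma lets one arrange $p(t)\notin\langle p_1,\dots,p_r\rangle$, and lifting to decompositions $p(t)=\sum a_i(t)p_i(t)$ with $p_i(0)=p_i$ shows the $p_i(t)$ cannot all be constant. This produces infinitely many points of $\Var{C}_r$ limiting to $p_1$, and monodromy transports the curve to every $p_i$. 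If you want to salvage your framework, you would need to inject this idea (or the equivalent one of varying $p$ within $\langle p_1,\dots,p_r\rangle$ and tracking the second decompositions, which all land in $\Var{C}_r$ and cannot be drawn from a finite set by the trisecant lemma); without it, the claim that the extra points ``have no choice but to fit into a continuous family'' is precisely the proposition restated. A secondary issue: even granted a positive-dimensional $\Var{C}_r$, your final step (``a one-dimensional linear section through the $p_i$'') does not produce a curve \emph{inside} $\Var{C}_r$ through all the $p_i$; you need the monodromy/irreducibility argument to ensure a positive-dimensional component through each $p_i$.
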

\begin{proof} 
If $\Var{S}$ is not $r$-identifiable, then we have in affine notation
\[
p = \sum_{i=1}^r a_i p_i = \sum_{i=1}^r b_i q_i
\]
with $a_i, b_i\in\C$, $q_i \in \Var{S}$. At least one of the $q_i \notin \{p_1,\ldots,p_r\}$ for if $p$ would have two different expressions as a linear combination of the $p_i$, it would follow that the $p_i$ do not form a linearly independent set, contradicting the generality of the $p_i$. In fact, it would imply that $p$ is an element of the $(r-1)$-secant variety. By the generality of the points, Terracini's lemma applies, so that $\Tang{p}{\Sec{r}{\Var{S}}} = \Plane{H}$. Letting $b_i(t) \ne 0$ be a curve with a parameter $t$ in a neighborhood of $0$, in such a way that $b_i(0)=b_i$, the resulting tensor $p(t)= \sum_{i=1}^r b_i(t) q_i$ has a tangent space $\Tang{p(t)}{\Sec{r}{\Var{S}}}$ which is constant with respect to $t$ by Terracini's lemma. We can then choose $b_i(t)$ in such a way that $p(t) \notin \langle p_1,\ldots, p_r \rangle$, because otherwise the (generalized) Trisecant Lemma, see, e.g., Theorem 2.6 in \cite{ChCi2001COV}, would be contradicted as we would have that $\langle p_1,\ldots, p_r \rangle = \langle q_1,\ldots, q_r \rangle$ for general points. By the assumption of not $r$-identifiability and nondefectivity of $\Var{S}$, we may thus write
\[
p(t) = \sum_{i=1}^r a_i(t) p_i(t) \quad\mbox{with } a_i(0) = a_i \mbox{ and }
p_i(0) = p_i,
\]
where, by the previous argument, not all $p_i(t)$ can be constant. Then, we have infinitely many $p_1(t)$ such that $\Tang{p_1(t)}{\Var{S}} \subset \Tang{p(t)}{\Sec{r} {\Var{S}}} = \Plane{H}$. By monodromy we get infinitely many $p_i(t)$ such that $p_i(0) = p_i$ and $\Tang{p_i(t)}{\Var{S}} \subset H$ for any $i$. This concludes the proof.
\qquad\end{proof}

Note that we do not claim irreducibility of the tangential contact locus: it may have many components. In this case, however, since we can interchange by monodromy any couple of points $p_i,p_j$, it follows that the tangential contact locus has one component through every point $p_i$, as explained in Proposition 2.2 of \cite{ChCi2006COV}.

The algorithm in \cite{BCO2013COV} explicitly constructs Cartesian equations for the \emph{$r$-tangential contact locus} $\Var{C}_r$, as in (\ref{eqn:contact-var-def}), for the nondefective Segre variety $\Var{S}$.
The dimension of $\Var{C}_r$ equals the dimension of the tangent space at a general point, and by the generality of the points $p_1, \ldots, p_r$, we can can compute it, for the sake of simplicity, at $p_1$. From \refprop{infinitepoints} it follows that $\Var{S}$ is $r$-identifiable if $\Var{C}_r$ is zero-dimensional at $p_1$.

The gist of the algorithm in \cite{BCO2013COV} concerns the construction of the equations for $\Var{C}$. Consider the Segre embedding:
\begin{align*}
 s: \C^{n_1} \times \C^{n_2} \times \cdots \times \C^{n_d} &\to
\C^{\Pi} \\
(\sten{a}{ }{1}, \sten{a}{ }{2}, \ldots, \sten{a}{ }{d}) &\mapsto  \sten{a}{ }{1} \ktimes \sten{a}{ }{2} \ktimes \cdots \ktimes
\sten{a}{ }{d}
\end{align*}
whose image is the affine cone over the Segre variety.
For notational convenience, we let $m(\cdot)$ denote the bijection between linear and multilinear indices, such that $x_{m(i_1, \ldots, i_d)} = a_{i_1}^1 \cdots a_{i_d}^d$ whenever $\mathbf{x} = \sten{a}{ }{1} \ktimes \cdots \ktimes \sten{a}{ }{d}$. We say that the source of $s$ provides a parameterization of the points on the affine cone over the Segre variety $\Var{S}$. First, a particular $\Plane{H}$ is constructed by choosing $r$ particular points $p_1, \ldots, p_r \in \Var{S}$ and considering the span of the tangent spaces in these points. We may assume without loss of generality that $p_1 = \vect{e}_{1}^1 \ktimes \cdots \ktimes \vect{e}_{1}^d$, where $\vect{e}_{1}^{i}$ is the first standard basis vector of the corresponding vector space $\C^{n_i}$. Suppose that the $\ell$ independent Cartesian equations of $\Plane{H} \subset \Pj\C^\Pi$ are:
\begin{align*}
q_l (x_1, x_2, \ldots, x_\Pi) = \sum_{i=1}^\Pi k_{i,l} x_i = \sum_{i_1=1}^{n_1} 
\sum_{i_2=1}^{n_2} \cdots \sum_{i_d=1}^{n_d} k_{m(i_1, i_2, \ldots, i_d),l} 
x_{m(i_1,i_2,\ldots,i_d)} = 0,
\end{align*}
for $l = 1,2,\ldots, \ell$, wherein the coefficients $k_{i,l}$ are constants, because the choice of the particular points $p_1, \ldots, p_r$ is fixed. Note that $\Plane{H}$ is thus an $(\Pi-\ell)$-dimensional linear subspace of $\Pj\C^\Pi$. Then, the intersection of a general point $p = \sten{a}{ }{1} \ktimes \sten{a}{ }{2} \ktimes \cdots \ktimes \sten{a}{ }{d} \in \Var{S}$, assuming without loss of generality that $a_{1}^k = 1$ for $k = 1, \ldots, d$, with $\mathrm{H}$ can be parameterized by simple substitution:
\begin{align}\label{eqn:polynomial-eqns}
q_l ( \sten{a}{ }{1}, \sten{a}{ }{2}, \ldots, \sten{a}{ }{d} ) = 
\sum_{i_1=1}^{n_1} \sum_{i_2=1}^{n_2} \cdots \sum_{i_d=1}^{n_d} k_{m(i_1, i_2,
\ldots, 
i_d),l} a_{i_1}^1 a_{i_2}^2 \cdots a_{i_d}^d = 0,
\end{align}
for $l = 1,2,\ldots,\ell$. Interestingly, to impose that $\Tang{p}{\Var{S}} \subset \mathrm{H}$, it suffices, due to linearity, that each of the basis vectors in the tangent space $\Tang{p} {\Var{S}}$ satisfies the above Cartesian equations. 
An explicit description of the tangent space is readily obtained by taking partial derivatives with respect to the parameters $\vect{a} = \sten{a}{ }{1}, \ldots, \sten{a}{ }{d}$ of the equations for the Segre variety: $x_{m(i_1, \ldots, i_d)} = a_{i_1}^1 a_{i_2}^2 \cdots a_{i_d}^d$. By the linearity of the Cartesian equations, it follows that we may simply take partial derivatives of \refeqn{eqn:polynomial-eqns} with respect to the parameters $\vect{a}$ to obtain the equations for $\Var{C}$. We will concisely write: 
\begin{align} \label{eqn:contact-var-eqns}
\{ \tfrac{\partial}{\partial \vect{a}} q_l (\vect{a}) = 0 \}_{l=1}^\ell,
\end{align}
where $\frac{\partial}{\partial \vect{a}} q_l (\vect{a})$ represents the system of equations obtained by partial derivation of \refeqn{eqn:polynomial-eqns} to each of the parameters $\vect{a}$. Given these equations of $\Var{C}$, the dimension of this algebraic variety is obtained, by definition, as the dimension of the (linear) tangent space in a general point of $\Var{C}$, which is again obtained by taking partial derivatives with respect to the parameterization. We note that this corresponds to computing the Jacobian of the above equations, or, equivalently, the ``stacked Hessian'' of the multivariate homogeneous polynomial \refeqn{eqn:polynomial-eqns} evaluated in a general point of $\Var{C}$. We choose to evaluate it in the point $p = p_1 \in \Var{C}$. The stacked Hessian $H = \left[\begin{smallmatrix}H^1 & H^2 & \cdots & H^\ell \end{smallmatrix}\right]$ is a block matrix wherein every block corresponds to the double partial derivation of $q_l$ with respect to the parameters; i.e., $H^l$ is the Hessian of $q_l$ evaluated in $p_1$. These Hessians admit an additional block structure:
\begin{align} \label{eqn:one-hessian}
 H^l = 
\begin{bmatrix}
H^l_{11} & H^l_{12} & \cdots & H^l_{1d} \\[3pt]
H^l_{21} & H^l_{22} & \cdots & H^l_{2d} \\[3pt]
\vdots   & \vdots   & \ddots & \vdots   \\[3pt]
H^l_{d1} & H^l_{d2} & \cdots & H^l_{dd} \\[3pt]
\end{bmatrix}
\quad\mbox{where}\quad
(H^l_{IJ})_{ij} = \frac{\partial^2 q_l}{\partial a_{1+i}^{I} \partial
a_{1+j}^{J}} \;
\Biggr|_{p=p_1},
\end{align}
for $1 \le I, J \le d$ with $i = 1, \ldots, n_I-1$ and $j = 1, \ldots, n_J-1$. Note that $H^l \in \mathbb{C}^{\Sigma \times \Sigma}$ because $a_{1}^{k} = 1$; thus, we need not derive to it. From \refeqn{eqn:polynomial-eqns} it is also clear that deriving twice in mode $I$, i.e., to $a_{i}^{I}$ and $a_j^{I}$, is zero, because none of the terms has two variables from the same mode. This explains why the block diagonal of $H^l$ is, in fact, zero: $H^l_{II} = 0$. It is straightforward to verify that all nonconstant terms in \refeqn{eqn:polynomial-eqns} after the double partial derivation are zero due to the special choice of $p_1$. As a result, the off-diagonal block matrices $H_{IJ}^l$, $I \ne J$, are given explicitly by \begin{align} \label{eqn:hessian-offdiagonal-block} (H_{IJ}^l)_{ij} = k_{m(1,\ldots,1,i+1,1,\ldots,1,j+1,1,\ldots,1),l} \end{align} where $i$ is at position $I$ and $j$ at position $J$ in the multi-index. 

The rank of $H$ reveals the local codimension of $\Var{C}$; we recall that $\Var{C}$ is specified by the Cartesian equations, so that its dimension is is the dimension of $\Var{S}$ minus the number of independent additionally imposed conditions which are given by \refeqn{eqn:contact-var-eqns}. If $H$ is of maximal rank, we can be sure that $p = p_1$ is a general point and thus that the local dimension equals the global dimension.\footnote{This is easy to understand from the fact that the elements of $H$ are multivariate polynomials in the variables $\vect{a}$. Consider the set of $\Sigma \times \Sigma$ minors of $H$, then the determinant is also a multivariate polynomial in the parameters $\vect{a}$ and at least one of them is nonzero in $p_1$ because $H$ is of maximal rank. The existence of an $\epsilon$-neighborhood around $\vect{a}$ where this property is maintained follows immediately.} On the other hand, if the rank of $H$ is not maximal, the algorithm is unable to conclude that the Segre variety $\Var{S}$ is $r$-identifiable. This problem may have arisen from an unfortunate choice of initial points $p_1, \ldots, p_r$, so it may be advised to rerun the algorithm several times. If in none of these runs $H$ has maximum rank, this may be taken as an indication that the Segre variety $\Var{S}$ is $r$-tangentially weakly defective, in which case it may or may not be $r$-identifiable. Conversely, if $H$ has the maximum dimension, $\Var{C}$ is zero-dimensional, and we may conclude that $\Var{S}$ is $r$-identifiable by \refprop{infinitepoints}. 
For future reference, we state this as 
\begin{proposition}\label{prop_reformulation}
Let $\Var{S} \subset \Pj\C^\Pi$ be a Segre variety of dimension $\Sigma$. Let $H = [ \begin{smallmatrix} H^1 & \cdots & H^\ell \end{smallmatrix} ]$ be the stacked Hessian with $H^l$ as in \refeqn{eqn:one-hessian}. Assume that the rank of $H$ is maximal, i.e., equal to $\Sigma$, then $\Var{S}$ is (generically) $r$-identifiable.
\end{proposition}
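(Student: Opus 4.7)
My plan is to combine the detailed analysis in the preceding paragraphs with \refprop{infinitepoints}. First, I would make explicit that the tangential contact locus $\Var{C} = \Var{C}_r$ is scheme-theoretically cut out in the affine parameter space of $\Var{S}$ (after dehomogenizing with $a_1^k = 1$ for $k = 1, \ldots, d$) by the polynomial system $\{\partial q_l/\partial \vect{a} = 0\}_{l=1}^\ell$ of \refeqn{eqn:contact-var-eqns}. The Zariski tangent space of $\Var{C}$ at any point is by definition the kernel of the Jacobian of this system, and at the chosen point $p_1 = \vect{e}_1^1 \ktimes \cdots \ktimes \vect{e}_1^d$ this Jacobian equals (up to transposition) the stacked Hessian $H$ of \refeqn{eqn:one-hessian}: the $l$-th block $H^l$ is precisely the derivative of $\partial q_l / \partial \vect{a}$ evaluated at $p_1$, and the identification with the Jacobian uses the symmetry of each $H^l$.

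Second, the hypothesis $\mathrm{rank}(H) = \Sigma$ means that $H$ has full row rank, equivalently, its columns span $\C^{\Sigma}$, so the kernel corresponding to the Zariski tangent space is trivial. Since the local dimension of an algebraic variety at a point is bounded above by the dimension of its Zariski tangent space there, we conclude that $p_1$ is an isolated (in fact smooth) point of $\Var{C}$, i.e., $\dim_{p_1} \Var{C} = 0$.

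Third, although the specific points $p_1, \ldots, p_r$ used above are not themselves generic, the rank of $H$ is a lower-semicontinuous function of the configuration $(p_1, \ldots, p_r) \in \Var{S}^r$: indeed, the entries of $H$ are polynomials in the coordinates of these points, so the locus where $\mathrm{rank}(H) = \Sigma$ is Zariski-open. Since this open set is nonempty by our hypothesis, it contains a generic configuration, for which $\Var{C}_r$ is again zero-dimensional at $p_1$. Finally, if $\Var{S}$ were not generically $r$-identifiable, \refprop{infinitepoints} would produce a curve inside $\Var{C}_r$ through $p_1$ for generic $p_1, \ldots, p_r$, forcing $\dim_{p_1} \Var{C}_r \ge 1$ and contradicting the previous step.

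The main obstacle is the transition from the non-generic, highly structured point $p_1$ at which $H$ is computed to the genericity required by \refprop{infinitepoints}. This is precisely the issue addressed by the footnote inserted just before the statement, and it is overcome by the semicontinuity argument above, which only costs us the open condition that the polynomial entries of $H$ sustain full rank.
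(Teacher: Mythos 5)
Your proposal is correct and follows essentially the same route as the paper: the paper ``proves'' \refprop{prop_reformulation} through the discussion immediately preceding it, identifying the stacked Hessian with the Jacobian of the equations \refeqn{eqn:contact-var-eqns} cutting out $\Var{C}_r$, concluding zero-dimensionality at $p_1$ from full rank, handling the passage from the particular configuration to a general one by the same semicontinuity-of-rank observation (relegated there to a footnote), and invoking \refprop{infinitepoints}. Your write-up merely makes these steps more explicit.
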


We left two items unspecified thus far: firstly, we did not mention how to assess that the $r$-secant variety is nondefective, which is required for \refprop{infinitepoints} to be applicable; and, second, the construction of the equations for the kernel was not detailed. We tackle both issues concurrently.  Recall that $\Sec{r}{\Var{S}}$ is nondefective if and only if its dimension is maximal. For verifying this property, Terracini's lemma is typically exploited to reduce the computation of the tangent space at a point on $\Sec{r}{\Var{S}}$ to computing the span of tangent spaces to $r$ general points on the Segre variety; see, e.g., \cite{Abo2010COV,Comon2009COV,Vannieuwenhoven2012cCOV}. Recall that the linear space $\mathrm{H}$ under consideration in \refprop{prop:CO} and \ref{infinitepoints} is exactly equal to $\Tang{p}{\Sec{r}{\Var{S}}}$. For computing the Cartesian equations of this space practically, we note that $\mathrm{H}$ corresponds to the column span of some matrix $T \in \C^{\Pi \times r(\Sigma+1)}$; hence, the coefficients of the Cartesian equations can be found as any set of basis vectors for the kernel of $T$, which can be obtained by applying Gaussian elimination to the extended system $\left[\begin{smallmatrix}T & I\end{smallmatrix}\right]$. If $K = \left[\begin{smallmatrix} \vect{k}_1 & \vect{k}_2 & \cdots & \vect{k}_\ell \end{smallmatrix}\right]$ is a basis for the null space thus obtained, then the columns of $K$ are the coefficients in \refeqn{eqn:polynomial-eqns}. The test for nondefectivity consists of verifying that $\Pi - r(\Sigma+1) = \ell$. If this equality is not satisfied, the algorithm cannot conclude that the Segre variety is $r$-identifiable: the lower rank may have been caused either by an unfortunate selection of the initial points $p_1, p_2, \ldots, p_r$, or by a defective $r$-secant. As $r$-defective secant varieties are not generically $r$-identifiable \cite{Landsberg2012COV}, the algorithm must stop here. Finally, the required matrix representation of $\Plane{H}$ is easily constructed. It is well-known, see, e.g., \cite{AOP2009COV,Landsberg2012COV,Vannieuwenhoven2012cCOV}, that the span of $\mathrm{T}_{p_i} \Var{S}$, $i = 1, \ldots, r$, is represented by the column span of
\begin{align}\label{eqn:tangent-1}
 T_i = \begin{bmatrix}
        T_i^1 & T_i^2 & \cdots & T_i^d
       \end{bmatrix}
\mbox{ with }
T_i^k = \sten{a}{i}{1} \ktimes \cdots \ktimes \sten{a}{i}{k-1} \ktimes I_{n_k} \ktimes \sten{a}{i}{k+1} \ktimes \cdots \ktimes \sten{a}{i}{d},
\end{align}
and $I_{n_k}$ represents an identity matrix of order $n_k$. By Terracini's lemma, the span of $\mathrm{H} = \mathrm{T}_p {\Sec{r}{\Var{S}}}$ coincides with the column span of 
\begin{align}\label{eqn:tangent-2}
 T' = \begin{bmatrix} T_1 & T_2 & \cdots & T_r \end{bmatrix},
\end{align}
provided that the $p_i$ are sufficiently general. $T'$ is overparameterized, but a simple permutation matrix $P$ can reduce $T = T' P$ to a $\Pi \times r(\Sigma+1)$ matrix with the same column span. Generically, it suffices to remove the last column from $T_i^k$, $i=1,\ldots,r$, $k=2,\ldots,d$; see \cite{Vannieuwenhoven2012cCOV} for more details.\footnote{It is not mandatory to work with a basis for representing the span of $T$, but we found that it simplified the programming and improves the efficiency of the code.}

\subsection{Unanticipated weakly defective varieties}\label{newclass}
Using the approach outlined above, we encountered several previously unknown $\underline{r}$-tangentially weakly defective, and thus $\underline{r}$-weakly defective, varieties. Remarkably, all of the discovered cases were of the same type, and we will show that \emph{generic $\underline{r}$-identifiability can still hold} if the sufficient condition of Theorem \ref{th:fakeareok} about the rank of the stacked Hessian is satisfied. These examples were not detected in \cite{BCO2013COV} because only Segre varieties embedded in $\Pj\C^{\Pi}$ with $\Pi \le 100$ were investigated there, while the smallest instance of this new class occurs in the space $\Pj\C^{144}$, namely for the Segre variety $\Pj\C^{8} \times \Pj\C^{3} \times \Pj\C^{3} \times \Pj\C^{2}$.

The key observation that characterizes all of the unanticipated cases is  
\begin{lemma} \label{hessianconditions}
Let $\Var{S}$ be a nondefective Segre variety, $\ell = \Pi - \underline{r}(\Sigma+1)$ and
\[
 n_1 - 1> \ell \sum_{i=2}^d (n_i - 1),
\]
then the stacked Hessian $H = \begin{bmatrix} H^1 & H^2 & \cdots & H^\ell \end{bmatrix}$, with $H^l$ as in \refeqn{eqn:one-hessian}, is not of full rank. Instead, its rank is bounded from above by $(\ell+1)\sum_{i=2}^d (n_i - 1)$.
\end{lemma}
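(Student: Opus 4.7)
The plan is to exploit the block structure of each Hessian $H^l$ described in \refeqn{eqn:one-hessian}, particularly the fact that the diagonal blocks $H^l_{II}$ vanish. Setting $\Sigma_1 = n_1-1$ and $\Sigma' = \sum_{i=2}^d(n_i-1)$, so that $\Sigma = \Sigma_1 + \Sigma'$, I would partition the rows of the stacked Hessian $H$ into the $\Sigma_1$ rows indexed by mode $I=1$ and the remaining $\Sigma'$ rows indexed by modes $I=2,\ldots,d$.

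The key observation is that within each block $H^l$, the first $\Sigma_1$ rows form a submatrix $\bigl[\,0\ \ H^l_{12}\ \cdots\ H^l_{1d}\,\bigr]$, because $H^l_{11}=0$. Hence the $\Sigma_1$ mode-$1$ rows of $H^l$ can be nonzero only in the $\Sigma'$ columns corresponding to modes $2,\ldots,d$. Collecting these submatrices across the $\ell$ stacked blocks, the mode-$1$ row-band of $H$ has nonzero entries in at most $\ell\Sigma'$ columns. Therefore the rank contribution of the first $\Sigma_1$ rows of $H$ is at most $\ell\Sigma'$. The complementary $\Sigma'$ rows trivially contribute rank at most $\Sigma'$, so
\[
\operatorname{rank}(H)\le \ell\Sigma' + \Sigma' = (\ell+1)\sum_{i=2}^d(n_i-1),
\]
which is the stated upper bound.

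For the first assertion, full rank would require $\operatorname{rank}(H)=\Sigma=\Sigma_1+\Sigma'$. Combined with the bound just obtained, this forces $\Sigma_1 \le \ell\Sigma'$, i.e., $n_1-1\le \ell\sum_{i=2}^d(n_i-1)$, contradicting the hypothesis. Hence $H$ cannot be of full rank.

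There is essentially no technical obstacle here: once the block structure of \refeqn{eqn:one-hessian} and the vanishing of the diagonal blocks $H^l_{II}$ are made explicit, the argument reduces to elementary bookkeeping on how many columns can possibly support nonzero entries in a given row-band. The only point worth double-checking is that the vanishing $H^l_{II}=0$ is uniform in $l$, which is exactly the content of the observation following \refeqn{eqn:one-hessian} that partial derivatives taken twice in the same mode annihilate \refeqn{eqn:polynomial-eqns}; this makes the zero-block pattern structural rather than incidental, so the column-support count used above is valid for every $l$.
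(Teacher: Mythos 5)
Your proof is correct and follows essentially the same route as the paper: the authors also split $H$ into the mode-$1$ row band (whose nonzero entries, since $H^l_{11}=0$ for every $l$, occupy only the $\ell\sum_{i=2}^d(n_i-1)$ columns attached to modes $2,\ldots,d$) and the complementary band of $\sum_{i=2}^d(n_i-1)$ rows, then add the two rank bounds; they merely make the column regrouping explicit via a permutation matrix $P'$. The only cosmetic difference is that you derive the bound for the top band directly from column support, whereas the paper phrases it as the smaller of the two dimensions of the permuted block $H_{11}'$.
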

\begin{proof}
We can rearrange the columns of $H$ by applying an $\ell\Sigma \times \ell\Sigma$ permutation matrix $P'$ on the right, so that we find
\begin{align*}
\setlength{\extrarowheight}{3pt}
 HP' &= 
\left[\begin{array}{ccccccc|ccc}
 H^1_{12} & \cdots & H^1_{1d} & \cdots & H^\ell_{12} & \cdots & H^\ell_{1d} &
H^1_{11} & \cdots & H^\ell_{11}  \\[1pt]
\hline &&&&&&&\\[-8pt]
 H^1_{22} & \cdots & H^1_{2d} & \cdots & H^\ell_{22} & \cdots & H^\ell_{2d} &
H^1_{21} & \cdots & H^\ell_{21} \\
 \vdots   & \ddots & \vdots   & \cdots & \vdots      & \ddots & \vdots      &
\vdots   & \ddots & \vdots \\
 H^1_{d2} & \cdots & H^1_{dd} & \cdots & H^\ell_{d2} & \cdots & H^\ell_{dd} &
H^1_{d1} & \cdots & H^\ell_{d1} \\
\end{array}\right]\\
&= 
\left[\begin{array}{c|c}
H_{11}' & 0 \\[1pt]
\hline &\\[-8pt]
H_{21}' & H_{22}'
\end{array}\right],
\end{align*}
where $H$, and $HP'$, are $\Sigma \times \ell \Sigma$ matrices, $H_{11}'$ is $(n_1-1) \times \ell\sum_{i=2}^d (n_i-1)$, $H_{21}'$ is $\sum_{i=2}^d (n_i-1) \times \ell \sum_{i=2}^d (n_i-1)$, and $H_{22}'$ is $\sum_{i=2}^d (n_i-1) \times \ell (n_1 - 1)$. The rank of $\left[\begin{smallmatrix} H_{11}' & 0 \end{smallmatrix}\right]$ is the rank of $H_{11}'$, which is at most $\ell \sum_{i=2}^d (n_i - 1)$ because it is the smaller of the two dimensions provided that the condition in the lemma holds. The rank of $\left[\begin{smallmatrix}H_{21}' & H_{22}'\end{smallmatrix}\right]$ is bounded by $\sum_{i=2}^d (n_i - 1)$, because it is the smaller of the two dimensions. Combining above upper bounds for the rank concludes the proof.
\qquad\end{proof}

As an immediate corollary, we obtain\footnote{Note that the corollary exploits the observation that defective varieties are also generically $\underline{r}$-tangentially weakly defective.}

\begin{corollary} \label{cor:triangle-inequality}
A (possibly defective) Segre variety $\Var{S}$ satisfying the arithmetic conditions in \reflem{hessianconditions} is $\underline{r}$-tan\-gen\-tial\-ly weakly defective, and, hence, 
$\underline{r}$-weakly defective.
\end{corollary}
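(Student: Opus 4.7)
The plan is to split into two cases, depending on whether $\Var{S}$ is nondefective in its $\underline{r}$-secant, since \reflem{hessianconditions} only applies under nondefectivity while the corollary is stated for possibly defective varieties.

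In the nondefective case I would invoke \reflem{hessianconditions} directly. The asserted rank bound $(\ell+1) \sum_{i=2}^d (n_i - 1)$ is strictly smaller than $\Sigma = (n_1 - 1) + \sum_{i=2}^d (n_i - 1)$ exactly when $n_1 - 1 > \ell \sum_{i=2}^d (n_i - 1)$, which is the arithmetic hypothesis. Hence $H$ fails to have maximal rank. By the discussion preceding \refprop{prop_reformulation}, the stacked Hessian is the Jacobian of the polynomial system \refeqn{eqn:contact-var-eqns} cutting out $\Var{C}_{\underline{r}}$, evaluated at $p_1$. A deficient-rank Jacobian at $p_1$ forces $\Var{C}_{\underline{r}}$ to have strictly positive local dimension at $p_1$, producing a curve in $\Var{C}_{\underline{r}}$ through $p_1$ and hence points of $\Var{C}_{\underline{r}}$ distinct from $\{p_1, \ldots, p_{\underline{r}}\}$; this is precisely the definition of $\underline{r}$-tangential weak defectivity.

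For the defective case I would use, as the footnote suggests, the classical fact that defectivity forces tangential weak defectivity in the subgeneric range. If $\dim \Sec{\underline{r}}{\Var{S}}$ is smaller than expected, Terracini's lemma shows that the abstract secant map $\Var{S}^{\ttimes \underline{r}} \dashrightarrow \Sec{\underline{r}}{\Var{S}}$ has positive-dimensional fibers. Choosing a curve inside a generic fiber yields a one-parameter family of distinct rank-$\underline{r}$ decompositions of a single point $p$, and Terracini's lemma then keeps every summand's tangent space inside the fixed plane $\Plane{H} = \Tang{p}{\Sec{\underline{r}}{\Var{S}}}$. This directly enlarges $\Var{C}_{\underline{r}}$ beyond $\{p_1, \ldots, p_{\underline{r}}\}$, so $\Var{S}$ is $\underline{r}$-tangentially weakly defective here too.

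The passage from tangential to ordinary weak defectivity is immediate: if $q \notin \{p_1, \ldots, p_{\underline{r}}\}$ satisfies $\Tang{q}{\Var{S}} \subseteq \Plane{H}$, then any hyperplane containing $\Plane{H}$ also contains $\Tang{q}{\Var{S}}$ and is thus tangent to $\Var{S}$ at $q$. The only delicate step is the defective case, where one must check that the curve obtained from the fiber truly supplies \emph{new} summands rather than merely reparametrising $\{p_1, \ldots, p_{\underline{r}}\}$; this is guaranteed by the generalised Trisecant lemma invoked in the proof of \refprop{infinitepoints}.
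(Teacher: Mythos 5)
Your overall structure is exactly the paper's: the paper offers no written proof, declaring the corollary ``immediate'' from \reflem{hessianconditions} and relegating the defective case to a footnote (``defective varieties are also generically $\underline{r}$-tangentially weakly defective''). Your case split, your rank comparison $(\ell+1)\sum_{i=2}^d(n_i-1)<\Sigma \Leftrightarrow n_1-1>\ell\sum_{i=2}^d(n_i-1)$, your treatment of the defective case via positive-dimensional fibers of the abstract secant map, and your passage from tangential to ordinary weak defectivity via hyperplanes containing $\Plane{H}$ are all correct and consistent with what the authors intend.

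One step deserves more care than you give it: the inference ``deficient-rank Jacobian at $p_1$ forces $\Var{C}_{\underline{r}}$ to have strictly positive local dimension at $p_1$, producing a curve.'' Rank-deficiency of the Jacobian of the defining equations at a point of the zero locus only forces the \emph{Zariski tangent space} of $\Var{C}_{\underline{r}}$ at $p_1$ to be positive-dimensional; in general the locus could still be a zero-dimensional non-reduced point there. This is enough for tangential weak defectivity under the paper's formulation in \refprop{prop:CO}, which requires the contact locus to consist of \emph{simple} points, so your nondefective case survives; but your subsequent claim that one thereby obtains ``points of $\Var{C}_{\underline{r}}$ distinct from $\{p_1,\ldots,p_{\underline{r}}\}$'' --- which is what the deduction of ordinary $\underline{r}$-weak defectivity actually needs --- does not follow from the Jacobian alone. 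The honest source of an actual curve is the one the paper uses in the proof of \refthm{th:fakeareok}: by \cite[Theorem 3.3]{Ott2013COV} the tangential contact locus contains a linear space $L_i$ of dimension $(n_1-1)-\ell\sum_{j=2}^d(n_j-1)>0$ through each $p_i$. Citing that fact (or restructuring so that the weak-defectivity conclusion rests on it) closes the gap; as written, the nondefective half of your argument proves tangential weak defectivity in the scheme-theoretic sense but not yet the existence of extra contact points.
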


As the arithmetical condition on the size of $n_1$ in \reflem{hessianconditions} cannot be satisfied by replacing $\ell$ with $\ell+a(\Sigma+1)$ for some $a\ge 1$, this corollary can only show that some Segre varieties are $\underline{r}$-tangentially weakly defective: it never applies for $r < \underline{r}$.

We mentioned before that not $\underline{r}$-tangentially weakly defective is a sufficient condition for generic identifiability; now, it will be shown that it is not necessary. The following theorem namely states that Segre varieties satisfying the conditions of \reflem{hessianconditions} can still be generically $\underline{r}$-identifiable.

\begin{theorem}\label{th:fakeareok}
 Let $\Var{S}$, $\ell$ and $H$ be as in \reflem{hessianconditions}. Assume $\Var{S}$ is not $\underline{r}$-defective. If the rank of the stacked Hessian $H$ is precisely
\begin{align*}
 (\ell+1)\sum_{i=2}^d (n_i - 1),
\end{align*}
then $\Var{S}$ is $\underline{r}$-(tangentially) weakly defective but nevertheless still $\underline{r}$-identifiable.
\end{theorem}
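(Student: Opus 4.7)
My plan is to argue by contradiction: assume $\Var{S}$ is not $\underline{r}$-identifiable and derive a contradiction by combining the rank hypothesis with Proposition \ref{infinitepoints}.

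First, I would extract the shape of the tangent space $T_{p_1}\Var{C}_{\underline{r}}$. This tangent space equals the left null space of the stacked Hessian $H$, equivalently of $HP'$. In the block form
\[
HP'=\begin{pmatrix} H_{11}' & 0\\ H_{21}' & H_{22}'\end{pmatrix},
\]
equality of $\mathrm{rank}(H)$ with the bound $(\ell+1)\sum_{i=2}^d(n_i-1)$ forces $H_{22}'$ to have full row rank $\sum_{i=2}^d(n_i-1)$ and $H_{11}'$ to have full column rank $\ell\sum_{i=2}^d(n_i-1)$. A vector $v=(v^{(1)},v^{(2)})$ in the left null space then satisfies $(v^{(2)})^T H_{22}'=0$, which forces $v^{(2)}=0$. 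Hence every tangent vector of $\Var{C}_{\underline{r}}$ at $p_1$ lies in the ``first-factor'' summand of $T_{p_1}\Var{S}$, and its dimension equals $(n_1-1)-\ell\sum_{i=2}^d(n_i-1)$.

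Second, I would promote this infinitesimal statement to a global description of the irreducible component $X_1$ of $\Var{C}_{\underline{r}}$ through $p_1$. Substituting a first-factor deformation $\alpha \otimes p_1^{(2)}\otimes\cdots\otimes p_1^{(d)}$ into the Cartesian equations \refeqn{eqn:polynomial-eqns} of $\Plane{H}$ and into the tangent-containment conditions $\frac{\partial}{\partial\vect{a}}q_l=0$, both systems become linear in $\alpha$ since only the first factor is variable. Combined with the tangent computation above, this forces $X_1=\Pj V_1\times\{p_1^{(2)}\}\times\cdots\times\{p_1^{(d)}\}$ for a linear subspace $V_1\subseteq\C^{n_1}$ containing $p_1^{(1)}$ of dimension $n_1-\ell\sum_{i=2}^d(n_i-1)$. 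The monodromy observation following \refprop{infinitepoints} (cf.\ Proposition 2.2 of \cite{ChCi2006COV}) then produces an analogous component $X_i=\Pj V_i\times\{p_i^{(2)}\}\times\cdots\times\{p_i^{(d)}\}$ of $\Var{C}_{\underline{r}}$ through each $p_i$.

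Third, assume for contradiction that there is a second decomposition $p=\sum b_jq_j$ with $\{q_j\}\ne\{p_i\}$. Running the deformation argument of \refprop{infinitepoints} symmetrically starting from this decomposition shows that each $q_j$ lies on a component of $\Var{C}_{\underline{r}}$ through some $p_{\sigma(j)}$, hence $q_j=\gamma_j\otimes p_{\sigma(j)}^{(2)}\otimes\cdots\otimes p_{\sigma(j)}^{(d)}$ with $\gamma_j\in V_{\sigma(j)}$. Flattening $p$ along the first factor and using that $u_i=p_i^{(2)}\otimes\cdots\otimes p_i^{(d)}$ are linearly independent for generic $p_i$ yields $a_i p_i^{(1)}=\sum_{j:\sigma(j)=i}b_j\gamma_j$. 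Minimality of the decomposition rules out $\sigma$ being non-injective or non-surjective, so $\sigma$ is a permutation; then each $\gamma_j$ is a scalar multiple of $p_{\sigma(j)}^{(1)}$, making $q_j\propto p_{\sigma(j)}$ and contradicting distinctness. The main obstacle I expect is this symmetric deformation step: ensuring that every $q_j$ of an alternate decomposition actually lies in $\bigcup_i X_i$ rather than on an extraneous component of $\Var{C}_{\underline{r}}$. Making this rigorous will likely require combining the Trisecant-type lemma used in the proof of \refprop{infinitepoints} with a dimension count ruling out components of $\Var{C}_{\underline{r}}$ meeting a generic fiber of the first-factor projection outside $\bigcup_i X_i$.
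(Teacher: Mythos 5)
Your strategy coincides with the paper's: show that, under the rank hypothesis, the component of the tangential contact locus through each $p_i$ is a linear space $L_i$ of dimension $(n_1-1)-\ell\sum_{j=2}^d(n_j-1)$ contained in the first-factor fiber $\Pj\C^{n_1-1}\times\{(p_i^{(2)},\ldots,p_i^{(d)})\}$, then conclude from the linear independence of these $r$ spaces. Your first two steps are correct and in one respect more self-contained than the paper's: the paper imports the existence of the linear spaces from \cite[Theorem 3.3]{Ott2013COV} and only then uses the rank hypothesis to see that each $L_i$ is an entire component, whereas you extract both the direction of $T_{p_1}\Var{C}_{\underline r}$ (the rank hypothesis forces $H_{11}'$ to have full column rank and $H_{22}'$ full row rank, which kills the second block of any left-null vector) and the linearity (the contact-locus equations restricted to a first-factor fiber are linear in $\alpha$, since every monomial of $q_l$ contains exactly one variable from each factor) directly from the block structure. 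That part stands.

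The gap is exactly where you flag it, and it is genuine as written: knowing that each $q_j$ lies on the contact locus (because $T_{q_j}\Var{S}\subset\Plane{H}$) does not place it on $\bigcup_i X_i$, since $\Var{C}_{\underline r}$ could a priori have components far from the $p_i$ about which your local analysis says nothing, and no dimension count you have set up excludes them. The paper circumvents this rather than resolving it globally: it does not try to locate the $q_j$ at all, but reruns the deformation of \refprop{infinitepoints}, producing a curve $p(t)=\sum_i a_i(t)p_i(t)$ with $p(t)\notin\langle p_1,\ldots,p_r\rangle$ in which each $p_i(t)$ is an arc of $\Var{C}_{\underline r}$ through $p_i$; for small $t$ these arcs are trapped in the neighborhoods where the contact locus coincides with $L_i$, so $p_i(t)\in L_i$ automatically, and the contradiction is then drawn from the fact that the $L_i$ span a subspace of maximal dimension (each $L_i\subset T_{p_i}\Var{S}$ and those tangent spaces are independent by nondefectivity), so that decompositions supported on $\bigcup_i L_i$ are unique. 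Your step three is salvageable by this route: the deformation gives $\langle q_1,\ldots,q_r\rangle\subseteq\langle L_1,\ldots,L_r\rangle=\bigoplus_i V_i\otimes u_i$, and your first-factor flattening together with the trisecant-type statement that the only rank-one points of $\Pj\langle u_1,\ldots,u_r\rangle$ are the $u_i$ themselves (for general $p_i$) then forces every Segre point of that span, in particular every $q_j$, onto $\bigcup_i L_i$ after all, at which point your permutation argument finishes. So the missing idea is precisely to derive membership of the alternative decomposition in $\langle L_1,\ldots,L_r\rangle$ from the deformation argument, instead of trying to control the global component structure of $\Var{C}_{\underline r}$.
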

\begin{proof}
Let $p_1,\ldots, p_r\in\Var{S}$ be general points. 
{By \cite[Theorem 3.3]{Ott2013COV}, the $1$-tangential contact locus, say at the point $p_1$,
contains
a linear space of dimension $(n_1-1)-\sum_{j=2}^d (n_j - 1)$ passing through $p_1$ and which is contained in the linear space $\Pj^{n_1-1}\subset \Var{S}$ through $p_1$. The same argument,
applied $\ell$ times, to $\ell$ independent hyperplanes defining
the span $\langle T_{p_1}\Var{S},\ldots, T_{p_r}\Var{S}\rangle$, shows that
the $r$-tangential contact locus contains the disjoint\footnote{Terracini's lemma is applicable because the points are general.} union of $r$ linear spaces $L_i$, for $i=1,\ldots, r$, of dimension $(n_1-1)-\ell \sum_{j=2}^d (n_j - 1)$, where $p_i\in L_i$ and $L_i$ is contained in the corresponding linear space $\Pj^{n_1-1}\subset\Var{S}$ passing
 through $p_i$.} The assumption on the rank shows that each of $L_i$ is a irreducible component of the $r$-tangential contact locus; more precisely, around each $p_i$, the $r$-tangentially contact locus 
locally coincides with $L_i$. If $\Var{S}$ were not $r$-identifiable, then, by the same argument in the proof of \refprop{infinitepoints},
we would get different decompositions $\sum_{i=1}^ra_iv_i$ with $v_i\in L_i$. But the spaces $L_i$ span a subspace of maximal dimension, because each $L_i\subset T_{p_i}\Var{S}$ and the subspaces $T_{p_i}\Var{S}$ span a subspace of maximal dimension.
Then, we would have uniqueness of decomposition, which is a contradiction. 
\qquad\end{proof}

\section{An algorithm verifying generic identifiability}\label{thealgorithm}
In the previous section, it was explained in detail how the sufficient condition in \refprop{infinitepoints} and \refthm{th:fakeareok} can be verified in practice, given a collection of $r$ general points on $\Var{S}$. Based on the above considerations, the algorithm we propose for verifying generic identifiability is summarized in \refalg{alg_generic_uniqueness}.
\begin{algorithm}
\caption{An algorithm for verifying generic uniqueness}
\label{alg_generic_uniqueness}
\begin{enumerate}
 \item[S1.] Choose $r$ random points $p_i \in \Var{S}_{\mathbb{Z}_q} = \mathbb{Z}_q^{n_1} \times \cdots \times \mathbb{Z}_q^{n_d}$.
 \item[S2.] Construct the tangent space matrix $T \in \Z_q^{\Pi \times r(\Sigma+1)}$ following \refeqn{eqn:tangent-1} and \refeqn{eqn:tangent-2}.
 \item[S3.] Construct the extended matrix $Y = \left[\begin{smallmatrix} T & I_{\Pi} \end{smallmatrix}\right]$, with $I_{\Pi}$ the $\Pi \times \Pi$ identity matrix. Perform row-wise Gaussian elimination (without column pivoting) to reduce $Y$ to row-echelon form.
 \item[S4.] Extract the null space matrix $K^T$ as the $l \times \Pi$ lower-right submatrix, where $l$ is maximal such that the $l \times r(\Sigma+1)$ lower-left submatrix is zero. 
 \item[S5.] If $l > \ell$, the Segre variety may be $r$-defective; the algorithm halts and claims it cannot prove $r$-identifiability for this choice of points. On the other hand, if $l = \ell$, as expected, then continue with the next step.
 \item[S6.] Construct the Hessian matrix $H^k$, $k = 1, \ldots, \ell$, following \refeqn{eqn:one-hessian} and \refeqn{eqn:hessian-offdiagonal-block}.
 \item[S7.] Compute the rank $r$ of the stacked Hessian $H$ by performing Gaussian elimination. We distinguish between two cases:
 \begin{itemize}
  \item[S7a.] Assume that the shape of $\Var{S}$ satisfies the condition in \reflem{hessianconditions}. If the rank $r$ satisfies \refthm{th:fakeareok}, then the algorithm has proved $r$-identifiability; otherwise, it claims that it cannot prove $r$-identifiability for these points.
  \item[S7b.] Assume that the shape of $\Var{S}$ does not satisfy the condition in \reflem{hessianconditions}. If the rank $r$ is maximal, i.e., $\Sigma$, then the algorithm has proved $r$-identifiability; otherwise, it claims that $r$-identifiability cannot be proved with these points.
  \end{itemize}
\end{enumerate}
\end{algorithm}

Note, in \refalg{alg_generic_uniqueness}, that we propose to verify generic $r$-identifiability of $\Var{S}$ by computations over the finite field\footnote{The correctness of this approach should be clear from the observation that all of the computed matrices over $\Z_q$, i.e., $T$, $K$, and $H$, are equivalent to the same matrices computed over $\C$ modulo $q$, combined with the fact that if any of these matrices are of full rank in $\Z_q$, then necessarily so in $\Z$ (and $\C$) as well. Note that the converse of the last statement is not true: if $H$ is not of full rank in $\Z_q$, then this should not be interpreted as evidence that the Segre variety is not identifiable.} $\Z_q$, with $q$ prime, for the following computational reason: with finite field computations the number of bits for representing one number remains constant throughout the execution of the algorithm; the number of bits required is the number of bits to represent $q-1$. This advantage does not hold for computations over $\Z$, $\mathbb{Q}$, or $\C$: the storage bit-complexity of basic Gaussian elimination is not constant but rather a function of the size of the matrix. In fact, to obtain a nonexponential storage bit-complexity\footnote{See \cite{Fang1997COV} for some specific examples.} some nontrivial modifications to the algorithm are necessary, see, e.g., \cite{Bareiss1968COV,Dixon1982COV}.

One may wonder why we do not consider an implementation with, e.g., double precision floating-point arithmetic, which leads to faster algorithms in practice. The problem with such an approach is the occurrence of roundoff errors, which necessitates a numerical analysis for investigating their propagation throughout the algorithm. In \cite{Vannieuwenhoven2012cCOV}, such an approach was pursued, leading to probabilistic statements about nondefectivity of secant varieties of Segre varieties; however, we believe that approach is more involved than the one proposed here.

\subsection{Experimental results}
The above algorithm was implemented in C++ using the Eigen matrix library \cite{EigenCOV}. The code for computing over the finite field $\Z_q$ with $q = 2^{7} - 1 = 127$ was also developed, as Eigen has no native support for this. This particular finite field with $q$ a Mersenne prime was selected because these primes have some favorable computational properties with respect to the modulus operations. A C++ code implementing \refalg{alg_generic_uniqueness} is included in the ancillary files accompanying this paper. The algorithm we provide along with the manuscript handles the setting in which the Hessian criterion is verified at every $p_1, \ldots, p_r$
(not only at $p_1$), so that not all optimizations discussed in \refsec{sectionalgtesting} apply. With this code one can also verify the example presented in \refsec{sec_specific_example}.

Recall that generic $\underline{r}$-identifiability implies generic $r$-identifiability for all $r < \underline{r}$, so that we may restrict ourselves to the case of $\underline{r}$. With the algorithm presented here, generic $\underline{r}$-identifiability was assessed for all spaces with $\Pi \le 15000$; the largest number of factors tested was $13$ for the variety $(\Pj\C^{2})^{13}$. These experiments extend \cite{BCO2013COV} by two orders of magnitude. In particular, all results pertaining to Segre varieties with at least $7$ factors are original, as well as most results for less factors. 

In all of the $75993$ tested spaces, the algorithm proved generic $\underline{r}$-identifiability;\footnote{In the count of tested spaces, we do not include all of the known exceptions that were presented in \refthm{main1}.} these results include the spaces $\Pj\C^n \times \Pj\C^n \times \Pj\C^3$ with $n$ odd, which are $\underline{r}$-identifiable, but not $(\underline{r}+1)$-identifiable because the variety is defective for that rank. In $973$ cases, \reflem{hessianconditions} applied, and, hence, step S7a in \refalg{alg_generic_uniqueness} proved generic identifiability; in all other cases, generic identifiability was proved through step S7b. For the spaces $\Pj\C^n \times \Pj\C^n \times \Pj\C^2$, we could not always prove $\underline{r}$-identifiability in a small number of attempts; therefore, $\underline{r}$-identifiability in these spaces was established by considering computations over the larger prime field $\Z_{8191}$.

For the sake of completeness, we present in \reftab{tab_condition_comparison} results analogous to Table 6.2 in \cite{Domanov2013bCOV}, comparing the maximum rank for which generic $r$-identifiability holds, according to the Domanov--De Lathauwer sufficient condition \cite{Domanov2013bCOV}, which improves Kruskal's condition \refeqn{eq:kruskalrange}, and according to the sufficient criterion presented in this paper. From the table one learns that the proposed sufficient condition considerably improves upon the best results from \cite{Domanov2013bCOV}; in fact, aside from the perfect shapes, our results are optimal.
We remark that in the first row of \reftab{tab_condition_comparison}, it is well known that generic $(\underline{r}+1)$-identifiability holds, as can be detected using Domanov--De Lathauwer's criterion from \cite{Domanov2013bCOV}, while our algorithm cannot provide an answer in this case because they are perfect shapes.

\begin{table} \footnotesize
\newcommand{\ull}[1]{\mathit{#1}}
\newcommand{\exc}[1]{\mathbf{#1}}
\caption{A comparison between the maximum rank for which generic $r$-identifiability can be proved for $\Var{S} = \Pj\C^{m} \times \Pj\C^{n} \times \Pj\C^{n}$ using the Domanov--De Lathauwer criterion \cite[Proposition 1.31 and Table 6.2]{Domanov2013bCOV} ($\diamondsuit$) and the sufficient condition verified by \refalg{alg_generic_uniqueness} ($\clubsuit$). A maximum rank displayed in a slanted font indicates that the value is optimal. In bold face the {possibly suboptimal maximum rank values, all of which correspond to perfect shapes,} are highlighted in the case of \refalg{alg_generic_uniqueness}.}
\label{tab_condition_comparison}
\begin{center}
\begin{tabular}{r c rr c rr c rr c rr c rr c rr}
\toprule
$m$ & \multicolumn{17}{c}{$n$} \\
\cmidrule{3-19}
 && \multicolumn{2}{c}{$4$} && \multicolumn{2}{c}{$5$} && \multicolumn{2}{c}{$6$} && \multicolumn{2}{c}{$7$} && \multicolumn{2}{c}{$8$} && \multicolumn{2}{c}{$9$} \\
\cmidrule{3-4} \cmidrule{6-7} \cmidrule{9-10} \cmidrule{12-13} \cmidrule{15-16} \cmidrule{18-19}
 && $\diamondsuit$ & $\clubsuit$ && $\diamondsuit$ & $\clubsuit$ && $\diamondsuit$ & $\clubsuit$ && $\diamondsuit$ & $\clubsuit$ && $\diamondsuit$ & $\clubsuit$ && $\diamondsuit$ & $\clubsuit$  \\
\midrule
2 && $\ull{4}$ & $\exc{3}$ 		&& $\ull{5}$ & $\exc{4}$   && $\ull{6}$ & $\exc{5}$   && $\ull{7}$ & $\exc{6}$   && $\ull{8}$ & $\exc{7}$   && $\ull{9}$ & $\exc{8}$ \\
3 && 4 & $\ull{5}$ 		&& 5 & $\ull{6}$   && 6 & $\ull{8}$   && 7 & $\ull{9}$   && 8 & $\ull{11}$  && 9 & $\ull{12}$ \\
4 && 5 & $\ull{6}$ 		&& 6 & $\ull{8}$   && 7 & $\ull{10}$  && 8 & $\ull{12}$  && 9 & $\ull{14}$  && 10 & $\ull{16}$ \\
5 && 5 & $\ull{7}$ 		&& 6 & $\ull{9}$   && 7 & $\exc{11}$  && 8 & $\ull{14}$  && 10 & $\ull{16}$ && 11 & $\ull{19}$ \\
6 && 6 & $\exc{7}$		&& 7 & $\ull{10}$  && 8 & $\ull{13}$  && 9 & $\ull{16}$  && 10 & $\ull{19}$ && 11 & $\ull{22}$ \\
7 && 7 & $\ull{8}$ 		&& 8 & $\ull{11}$  && 9 & $\ull{14}$  && 9 & $\ull{18}$  && 11 & $\ull{21}$ && 12 & $\ull{24}$ \\
8 && 8 & $\ull{9}$ 		&& 9 & $\ull{12}$  && 9 & $\exc{15}$  && 10 & $\ull{19}$ && 11 & $\ull{23}$ && 12 & $\exc{26}$ \\
9 && $\ull{9}$ & $\ull{9}$ 	&& 9 & $\ull{13}$  && 10 & $\ull{17}$ && 11 & $\exc{20}$ && 12 & $\ull{25}$ && 13 & $\ull{29}$ \\
10 && $\ull{9}$ & $\ull{9}$	&& 10 & $\ull{13}$ && 11 & $\exc{17}$ && 12 & $\ull{22}$ && 13 & $\ull{26}$ && 14 & $\ull{31}$ \\
11 && $\ull{9}$ & $\ull{9}$	&& 11 & $\ull{14}$ && 12 & $\ull{18}$ && 13 & $\ull{23}$ && 14 & $\ull{28}$ && 15 & $\exc{32}$ \\
12 && $\ull{9}$ & $\ull{9}$	&& 12 & $\exc{14}$ && 13 & $\ull{19}$ && 14 & $\ull{24}$ && 15 & $\ull{29}$ && 15 & $\ull{34}$ \\
13 && $\ull{9}$ & $\ull{9}$	&& 13 & $\ull{15}$ && 14 & $\ull{20}$ && 14 & $\ull{25}$ && 15 & $\ull{30}$ && 16 & $\ull{36}$ \\
14 && $\ull{9}$ & $\ull{9}$	&& 14 & $\ull{15}$ && 14 & $\exc{20}$ && 15 & $\ull{26}$ && 16 & $\exc{31}$ && 17 & $\ull{37}$ \\
15 && $\ull{9}$ & $\ull{9}$	&& 14 & $\ull{16}$ && 15 & $\ull{21}$ && 16 & $\ull{27}$ && 17 & $\ull{33}$ && 18 & $\ull{39}$ \\
16 && $\ull{9}$ & $\ull{9}$	&& 14 & $\ull{16}$ && 16 & $\ull{22}$ && 17 & $\exc{27}$ && 18 & $\ull{34}$ && 19 & $\ull{40}$ \\
17 && $\ull{9}$ & $\ull{9}$	&& 14 & $\ull{16}$ && 17 & $\ull{22}$ && 18 & $\ull{28}$ && 19 & $\ull{35}$ && 20 & $\ull{41}$ \\
18 && $\ull{9}$ & $\ull{9}$	&& 14 & $\ull{16}$ && 18 & $\ull{23}$ && 19 & $\ull{29}$ && 20 & $\exc{35}$ && 20 & $\ull{42}$ \\
19 && $\ull{9}$ & $\ull{9}$	&& 14 & $\ull{16}$ && 19 & $\ull{23}$ && 20 & $\ull{30}$ && 20 & $\ull{36}$ && 21 & $\ull{43}$ \\
20 && $\ull{9}$ & $\ull{9}$	&& 14 & $\ull{16}$ && 20 & $\exc{23}$ && 20 & $\ull{30}$ && 21 & $\ull{37}$ && 22 & $\exc{44}$ \\
21 && $\ull{9}$ & $\ull{9}$	&& 14 & $\ull{16}$ && 21 & $\ull{24}$ && 21 & $\ull{31}$ && 22 & $\ull{38}$ && 23 & $\ull{45}$ \\
22 && $\ull{9}$ & $\ull{9}$	&& 14 & $\ull{16}$ && 21 & $\ull{24}$ && 22 & $\ull{31}$ && 23 & $\ull{39}$ && 24 & $\ull{46}$ \\
23 && $\ull{9}$ & $\ull{9}$	&& 14 & $\ull{16}$ && 21 & $\ull{25}$ && 23 & $\ull{32}$ && 24 & $\ull{39}$ && 25 & $\ull{47}$ \\
24 && $\ull{9}$ & $\ull{9}$	&& 14 & $\ull{16}$ && 21 & $\ull{25}$ && 24 & $\ull{32}$ && 25 & $\ull{40}$ && 26 & $\ull{48}$ \\
25 && $\ull{9}$ & $\ull{9}$	&& 14 & $\ull{16}$ && 21 & $\ull{25}$ && 25 & $\ull{33}$ && 26 & $\ull{41}$ && 26 & $\ull{49}$ \\
 26 && $\ull{9}$ & $\ull{9}$	&& 14 & $\ull{16}$ && 21 & $\ull{25}$ && 26 & $\ull{33}$ && 27 & $\ull{41}$ && 27 & $\ull{50}$ \\
 27 && $\ull{9}$ & $\ull{9}$	&& 14 & $\ull{16}$ && 21 & $\ull{25}$ && 27 & $\ull{33}$ && 27 & $\ull{42}$ && 28 & $\ull{50}$ \\
 28 && $\ull{9}$ & $\ull{9}$	&& 14 & $\ull{16}$ && 21 & $\ull{25}$ && 28 & $\ull{34}$ && 28 & $\ull{42}$ && 29 & $\ull{51}$ \\
 29 && $\ull{9}$ & $\ull{9}$	&& 14 & $\ull{16}$ && 21 & $\ull{25}$ && 29 & $\ull{34}$ && 29 & $\ull{43}$ && 30 & $\ull{52}$ \\
 30 && $\ull{9}$ & $\ull{9}$	&& 14 & $\ull{16}$ && 21 & $\ull{25}$ && 30 & $\exc{34}$ && 30 & $\ull{43}$ && 31 & $\ull{52}$ \\
 31 && $\ull{9}$ & $\ull{9}$	&& 14 & $\ull{16}$ && 21 & $\ull{25}$ && 30 & $\ull{35}$ && 31 & $\ull{44}$ && 32 & $\ull{53}$ \\
 32 && $\ull{9}$ & $\ull{9}$	&& 14 & $\ull{16}$ && 21 & $\ull{25}$ && 30 & $\ull{35}$ && 32 & $\ull{44}$ && 33 & $\exc{53}$ \\
 33 && $\ull{9}$ & $\ull{9}$	&& 14 & $\ull{16}$ && 21 & $\ull{25}$ && 30 & $\ull{35}$ && 33 & $\ull{44}$ && 34 & $\ull{54}$ \\
\bottomrule
\end{tabular}
\end{center}
\end{table}

\section{A sufficient condition for specific identifiability}\label{spec}
Assume we have a decomposition of a tensor $\tensor{A}$ as in \refeqn{eqn:cpd}. One could ask for an algorithm that detects whether this particular decomposition is unique, such as in Kruskal's lemma \cite{Kruskal1977COV}. In particular, one wonders if the algorithm we proposed is capable of giving a sufficient criterion to check if the decomposition is unique.

We begin with an observation concerning tensor subspaces. Assume that a tensor $\tensor{A} \in \F^{n_1 \times \cdots \times n_d}$, with $\F = \R$ or $\C$, whose decomposition is sought lives in a strict tensor subspace $A_1 \otimes \cdots \otimes A_d \subset \F^{n_1 \times \cdots \times n_d}$, with $\dim A_i = r_i \le n_i$ and at least one inequality strict. Letting $Q_i \in \F^{n_i \times r_i}$ be a basis for $A_i$, which, in practice, can be recovered using the HOSVD \cite{Tucker1966COV,Lathauwer2000aCOV,Vannieuwenhoven2012COV}, we may write
\[
\tensor{A} = (Q_1, \ldots, Q_d) \cdot \tensor{A}' \quad\mbox{with } \tensor{A}' \in \F^{r_1 \times \cdots \times r_d},
\] 
where the multilinear multiplication $\tensor{A} = (Q_1, \ldots, Q_d) \cdot \tensor{A}'$ can be defined as 
\[
 \tensor{A} = \sum_{j_1=1}^{r_1} \sum_{j_2=1}^{r_2} \cdots \sum_{j_d=1}^{r_d} \tensor{A}'_{j_1, j_2, \ldots, j_d} (Q_1 \vect{e}_{j_1}) \otimes (Q_2 \vect{e}_{j_2}) \otimes \cdots \otimes (Q_d \vect{e}_{j_d}),
\]
with $\vect{e}_{j_k}$ the $j_k$th standard basis vector of $\F^{r_k}$; see, e.g., \cite{Silva2008COV} for equivalent definitions. In the literature, the tuple $(r_1,r_2,\ldots,r_d)$ is called the multilinear rank of $\tensor{A}$ \cite{Hitchcock1927COV,Silva2008COV,Carlini2011COV}. The property of relevance to our discussion is the following.
\begin{theorem} \label{preservesuniqueness}
 Let $\tensor{A} \in A_1 \ktimes \cdots \ktimes A_d \subset \F^{n_1 \times \cdots \times n_d}$ be a tensor of rank $r$ where $A_i$ is a subspace of $\F^{n_i}$ of dimension $r_i \le n_i$. Let $Q_i$ be a matrix representing a basis for $A_i$. Then, $\tensor{A} = (Q_1, \ldots, Q_d) \cdot \tensor{A}'$ is $r$-identifiable if and only if $\tensor{A}' \in \F^{r_1 \times r_2 \times \cdots \times r_d}$ is $r$-identifiable.
\end{theorem}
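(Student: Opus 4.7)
The plan is to establish a bijection, modulo the trivial scaling and permutation indeterminacies, between the rank-$r$ decompositions of $\tensor{A}$ and those of $\tensor{A}'$. The natural candidates are the pushforward $D' \mapsto (Q_1,\ldots,Q_d) \cdot D'$ and the pullback $D \mapsto (Q_1^\dagger, \ldots, Q_d^\dagger) \cdot D$, where $Q_i^\dagger$ denotes any left inverse of the full column rank matrix $Q_i$. Note that $Q_i^\dagger Q_i = I_{r_i}$ implies $\tensor{A}' = (Q_1^\dagger,\ldots,Q_d^\dagger) \cdot \tensor{A}$.

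First, I would check that $\operatorname{rank}(\tensor{A}) = \operatorname{rank}(\tensor{A}')$. Applying $(Q_1,\ldots,Q_d)$ termwise to a rank-$s$ decomposition of $\tensor{A}'$ yields an at-most-$s$-term decomposition of $\tensor{A}$, with no factor vanishing by injectivity of the $Q_i$; hence $\operatorname{rank}(\tensor{A}) \le \operatorname{rank}(\tensor{A}')$. Applying $(Q_1^\dagger,\ldots,Q_d^\dagger)$ termwise to any decomposition of $\tensor{A}$ produces one of $\tensor{A}'$ with no more terms, so the two ranks coincide.

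The heart of the argument, and the step I expect to be the main obstacle, is showing that in any rank-$r$ decomposition $\tensor{A} = \sum_{j=1}^r \sten{a}{j}{1} \ktimes \cdots \ktimes \sten{a}{j}{d}$ every factor $\sten{a}{j}{i}$ already lies in $A_i$. My plan is to work one mode at a time using the mode-$i$ unfolding of $\tensor{A}$, whose column span $U_i$ (the $i$-th multilinear subspace) is contained in $A_i$ because $\tensor{A} \in A_1 \ktimes \cdots \ktimes A_d$. The unfolding has the form $\sum_j \sten{a}{j}{i} v_j^{T}$, where $v_j$ is the vectorization of $\bigotimes_{k \neq i} \sten{a}{j}{k}$. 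Minimality of $r$ excludes any linear dependence $\sum_j \lambda_j v_j = 0$ with $\lambda_{j_0} \neq 0$: substituting the resulting expression for $v_{j_0}$ back into the decomposition would rewrite $\tensor{A}$ as a sum of $r-1$ rank-one terms, a contradiction. Linear independence of the $v_j$ then forces the column span of the unfolding to coincide with $\operatorname{span}\{\sten{a}{j}{i}\}_{j=1}^r$, so each $\sten{a}{j}{i}$ lies in $U_i \subseteq A_i$.

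With all factors confined to $A_i$, the pushforward and pullback become mutually inverse operations on rank-$r$ decompositions: $Q_i^\dagger Q_i = I_{r_i}$, while $Q_i Q_i^\dagger$ acts as the identity on $A_i$. Both operations manifestly respect the scaling of individual rank-one summands and permutations of the $r$ summands, so the bijection descends to one between the equivalence classes of rank-$r$ decompositions of $\tensor{A}$ and of $\tensor{A}'$. Consequently, a unique class exists on one side if and only if it exists on the other, which is precisely the asserted equivalence of $r$-identifiability for $\tensor{A}$ and $\tensor{A}'$.
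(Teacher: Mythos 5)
Your proof is correct and follows the same overall strategy as the paper: both reduce the statement to showing that every factor of a minimal rank-$r$ decomposition of $\tensor{A}$ must already lie in the corresponding subspace $A_i$, after which the pushforward by $(Q_1,\ldots,Q_d)$ and its inverse on $A_1\otimes\cdots\otimes A_d$ give a bijection between decompositions. The difference is in how that key step is established. The paper obtains it by citing Corollary 2.2 of Buczy\'nski--Landsberg (and likewise cites the literature for $\operatorname{rank}(\tensor{A})=\operatorname{rank}(\tensor{A}')$), whereas you prove both facts from scratch: the two-sided rank inequality via the termwise pushforward and pullback, and the containment of factors via the mode-$i$ unfolding. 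Your linear-independence argument for the vectors $v_j$ is the standard one and is sound --- a dependence among the $v_j$ lets you absorb one summand into the others and contradicts minimality of $r$ --- and full column rank of $[v_1,\ldots,v_r]$ then forces the column span of the unfolding to equal $\operatorname{span}\{\sten{a}{j}{i}\}_j$, which sits inside $A_i$. What your route buys is a self-contained, elementary proof requiring only linear algebra; what the paper's citation buys is brevity and, implicitly, a statement (the cited corollary concerns more general settings) that is stronger than what is needed here. No gaps.
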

\begin{proof}
Recall that the rank of $\tensor{A}$ and $\tensor{A}'$ are equal; see, e.g., \cite{Buczynski2013COV,Landsberg2012COV}.

If $\tensor{A}$ is $r$-identifiable, then $\tensor{A}'$ is also $r$-identifiable. Indeed, if we assume that $\tensor{A}'$ has two different decompositions, then $\tensor{A} = (Q_1, \ldots, Q_d) \cdot \tensor{A}'$ also has at least two different decompositions by the properties of multilinear multiplication.

Conversely, if $\tensor{A}'$ has a unique decomposition, it follows, from the previous argument, that $\tensor{A}$ has exactly one decomposition of the type
\begin{align*}
 \tensor{A} = \sum_{i=1}^r Q_1 \sten{x}{i}{1} \ktimes Q_2 \sten{x}{i}{2} \ktimes \cdots \ktimes Q_d \sten{x}{i}{d};
\end{align*}
any alternative decomposition 
\(
 \tensor{A} = \sum_{i=1}^r \sten{a}{i}{1} \ktimes \cdots \ktimes \sten{a}{i}{d}
\)
should thus have at least one $i$ and $k$ such that $\sten{a}{i}{k}$ is not contained in the span of $A_k$. This, however, immediately contradicts with \cite[Corollary 2.2]{Buczynski2013COV}: that corollary implies that the number of terms in such a decomposition of $\tensor{A}$ would have to be strictly larger than $r$.
\qquad\end{proof}

By combining this theorem with the existence of the sporadic cases where generic $r$-identifiability does not hold, see \refthm{main1}, e.g., $\Pj\C^{4} \times \Pj\C^{4} \times \Pj^{4}$ with $r = 6$, we can readily prove the existence of some specific tensors to which the algorithm proposed in \refsec{thealgorithm} cannot be applied straightforwardly.

\begin{example}[A problematic case] \label{ex_complications}
Consider, for instance, a general rank-$6$ tensor $\tensor{A}$ of multilinear rank $(4,4,4)$ in a space $\Pj\C^{n_1} \otimes \Pj\C^{n_2} \otimes \Pj\C^{n_3}$ that is generically $6$-identifiable. Note that several such spaces exist, as proved by \refalg{alg_generic_uniqueness}; however, let us consider spaces of the type $\Pj\C^n \times \Pj\C^n \times \Pj\C^n$ with $n \ge 6$, which are generically $6$-identifiable as mentioned in the introduction. Then, by \refthm{preservesuniqueness}, $\tensor{A}$ may be written as $\tensor{A} = (Q_1,Q_2,Q_3) \cdot \tensor{A}'$, $Q_i \in \C^{n \times 4}$, where $\tensor{A}'$ is a general tensor of rank $6$ in $\C^{4} \otimes \C^{4} \otimes \C^{4}$. From \cite[Section 5]{CO2012COV} it is known that a general $\tensor{A}'$ is not $6$-identifiable; indeed, it has two decompositions. On the other hand, by definition of generality, the subset of tensors that are not identifiable in an open neighborhood of $\tensor{A}$ has measure zero, otherwise generic $6$-identifiability of $\Pj\C^n \otimes \Pj\C^n \otimes \Pj\C^n$ would be contradicted. As a consequence, inspecting \refprop{infinitepoints}, it is clear that the same proof can be applied to this case,\footnote{It is not difficult to prove that since Terracini's lemma applies, i.e., the dimension of the span of the individual tangent spaces is maximal, for (general) $\tensor{A}'$ in $\Pj\C^4 \times \Pj\C^4 \times \Pj\C^4$, then it also applies for the particular $\tensor{A} = (Q_1,Q_2,Q_3)\cdot\tensor{A}'$ in $\Pj\C^n \times \Pj\C^n \times \Pj\C^n$.} so that the proposed algorithm will detect that the rank of the stacked Hessian is maximal. That is, the proposed algorithm correctly detects that only in a set of measure zero (in either the Zariski or Euclidean topology) the $6$-identifiability property fails around $\tensor{A}$, \emph{but the algorithm has no means to detect that $\tensor{A}$ is precisely in this set of measure zero}.

The reason of the above behavior can be geometrically understood as follows. 
Let $\Var{S}'$ be the Segre variety of $\Pj\C^{4} \ttimes \Pj\C^{4} \ttimes \Pj\C^{4}$, which is naturally 
embedded in the Segre variety $\Var{S}$ of $\Pj\C^{n} \ttimes \Pj\C^{n} \ttimes \Pj\C^{n}$, $n\ge6$. 
When we consider the {\it abstract secant variety} $A\sigma_6(\Var{S})$, {as defined in \cite{ChCi2006COV}, i.e., 
the Zariski closure in $\mathrm{Sym}^6(\Var{S})\times\Pj(\C^{64})$ of the variety of pairs $((p_1,\dots,p_6),p)$
 where $p\in\langle p_1,\dots,p_6\rangle$}, and the natural projection 
$\pi_6: A\sigma_6(\Var{S})\to \sigma_6(\Var{S})$, then the fibers of $\pi_6$ are singletons over general points 
of $\sigma_6(\Var{S})$, 
while over (general) points $p' \in \sigma_6(\Var{S}')$ they consist of a pair of points. Zariski's Main Theorem \cite[Corollary III.11.4]{Hartshorne1997COV} states that the inverse image of a normal point 
under a birational projective morphism is connected. Then we find that $\sigma_6(\Var{S}')$ is contained in 
the singular locus of $\sigma_6(\Var{S})$. Indeed, $\sigma_6(\Var{S})$ is two-folded at a general point of 
$\sigma_6(\Var{S}')$. The stacked Hessian only detects what happens in a neighbourhood of $p$ in one of the 
two folds: the behaviour is perfectly regular there. 
\end{example}

The previous example showed that a straightforward application of the Hessian criterion may fail if the given rank-$r$ decomposition corresponds to a singular point of $\sigma_r(\Var{S})$. We continue to show that this is the only type of failure that prevents us from applying the Hessian criterion. That is, if the given decomposition corresponds to a nonsingular point, then one can try to prove its identifiability using the Hessian criterion of \refprop{prop_reformulation}. To prove this, we introduce two preparatory lemmas.

\begin{lemma}\label{terraciniholds} Let $\Var{S} \subset \Pj\C^\Pi$ be
a Segre variety of dimension $\Sigma$. Let $p_1, p_2, \ldots, p_r \in
\Var{S}$ and $p \in \Sec{r}{\Var{S}}$ in the span $\langle p_1, p_2,
\ldots, p_r \rangle$. Assume that $p$ is not contained in the singular
locus of $\Sec{r}{\Var{S}}$ and assume that
$$
\dim( \langle \mathrm{T}_{p_1} \Var{S}, \mathrm{T}_{p_2} \Var{S},
\ldots, \mathrm{T}_{p_r} \Var{S} \rangle)= r(\Sigma+1)-1,
$$
which is the expected dimension of the secant variety. Then, the
variety $\Var{S}$ is not $r$-defective and the conclusion of
Terracini's lemma holds, i.e.,
$$
\mathrm{T}_p \Sec{r}{\Var{S}} = \langle \mathrm{T}_{p_1} \Var{S},
\mathrm{T}_{p_2} \Var{S}, \ldots, \mathrm{T}_{p_r} \Var{S} \rangle.
$$
\end{lemma}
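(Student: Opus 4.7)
The plan is to combine three ingredients: the generic upper bound $\dim \Sec{r}{\Var{S}} \le r(\Sigma+1)-1$; the inclusion of the span of tangent spaces inside the tangent cone of the secant variety at any point of $\langle p_1,\ldots,p_r\rangle$; and the fact that, at a nonsingular point, the tangent cone equals the Zariski tangent space. The hypothesis $\dim \langle \Tang{p_1}{\Var{S}},\ldots,\Tang{p_r}{\Var{S}} \rangle = r(\Sigma+1)-1$ will then pin all these dimensions to the same value, forcing both nondefectivity and the equality claimed by Terracini. Note that a direct appeal to Terracini's lemma is not permitted here, since that statement was only established for general points $p_1,\ldots,p_r$, whereas in the present setting the points are arbitrary, subject only to the nonsingularity and dimension hypotheses.

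For the key containment I would parametrize $\Sec{r}{\Var{S}}$ by the rational join map
\[
\phi \colon \Var{S}^r \times \Pj\C^{r} \dashrightarrow \Pj\C^{\Pi}, \quad (q_1,\ldots,q_r,[\lambda_1:\cdots:\lambda_r]) \mapsto \sum_{i=1}^r \lambda_i q_i,
\]
whose image closure is the $r$-secant variety. Fixing $\lambda_i$ so that $p = \sum_i \lambda_i p_i$ and all $q_j$ for $j \ne i$ equal to $p_j$, and moving $q_i$ along an analytic arc in $\Var{S}$ starting at $p_i$, produces an analytic arc in $\Sec{r}{\Var{S}}$ through $p$ whose velocity vector is an arbitrary element of $\Tang{p_i}{\Var{S}}$ (scaled by $\lambda_i$). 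Hence every element of each $\Tang{p_i}{\Var{S}}$ lies in the tangent cone $\mathrm{TC}_p \Sec{r}{\Var{S}}$, and so does their span. Because $p$ is nonsingular on $\Sec{r}{\Var{S}}$, the tangent cone coincides with the Zariski tangent space, yielding
\[
\langle \Tang{p_1}{\Var{S}},\ldots,\Tang{p_r}{\Var{S}}\rangle \subseteq \Tang{p}{\Sec{r}{\Var{S}}}.
\]

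Putting the pieces together, I obtain the chain
\[
r(\Sigma+1)-1 \;=\; \dim\langle \Tang{p_1}{\Var{S}},\ldots,\Tang{p_r}{\Var{S}}\rangle \;\le\; \dim \Tang{p}{\Sec{r}{\Var{S}}} \;=\; \dim \Sec{r}{\Var{S}} \;\le\; r(\Sigma+1)-1,
\]
where the first equality is the hypothesis, the second inequality is the containment established above, the middle equality uses nonsingularity of $p$, and the last inequality is the standard upper bound on the dimension of an $r$-secant variety. All terms therefore equal $r(\Sigma+1)-1$: the equality of the third and fifth terms states that $\Var{S}$ is not $r$-defective, and the equality of the second and third terms shows that a linear subspace is contained in another of the same dimension, so the two must coincide, giving Terracini's conclusion.

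The main obstacle is the rigorous verification that the span of tangent spaces at specific (possibly non-generic) points sits inside the Zariski tangent space at $p$; a priori one only obtains containment in the tangent cone, which can be strictly larger at singular points, as the two-sheeted phenomenon described in \refex{ex_complications} illustrates. The nonsingularity hypothesis is precisely what converts the tangent-cone containment into a tangent-space containment and closes the dimension squeeze.
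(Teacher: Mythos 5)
Your proof is correct, and it reaches the same two-step skeleton as the paper's: first establish the containment $\langle \Tang{p_1}{\Var{S}},\ldots,\Tang{p_r}{\Var{S}}\rangle \subseteq \Tang{p}{\Sec{r}{\Var{S}}}$, then squeeze with $r(\Sigma+1)-1 \le \dim \Tang{p}{\Sec{r}{\Var{S}}} = \dim\Sec{r}{\Var{S}} \le r(\Sigma+1)-1$. The difference is in how the containment is obtained. The paper observes that the dimension hypothesis forces the $p_i$ to be linearly independent, so the abstract secant variety $A\sigma_r(\Var{S})$ is smooth at $((p_1,\ldots,p_r),p)$, and the differential of the projection $\pi_r$ carries its tangent space onto the span of the $\Tang{p_i}{\Var{S}}$, which therefore sits inside the Zariski tangent space at $p$. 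You instead exhibit explicit one-parameter arcs $\sum_j \lambda_j q_j(t)$ in $\Sec{r}{\Var{S}}$ through $p$ and take velocities; these arcs are exactly curves in the abstract secant variety pushed forward by $\pi_r$, so the two arguments are close cousins, with yours being the more elementary and self-contained version. Two small points. First, your phrase ``and so does their span'' is premature while you are still working in the tangent cone, which is a cone but not a linear space; the inference only becomes valid after you invoke nonsingularity to identify the cone with the (linear) Zariski tangent space. In fact you can bypass the tangent cone entirely: the velocity of an analytic arc in a variety always lies in the Zariski tangent space, which is linear, so the span is contained in $\Tang{p}{\Sec{r}{\Var{S}}}$ with no smoothness needed; nonsingularity is then used only to equate $\dim \Tang{p}{\Sec{r}{\Var{S}}}$ with $\dim\Sec{r}{\Var{S}}$. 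Second, your arc argument recovers all of $\Tang{p_i}{\Var{S}}$ only when the coefficient $\lambda_i$ of $p_i$ in $p=\sum_j\lambda_j p_j$ is nonzero; the paper's proof has the same implicit assumption (if some $\lambda_i=0$ the image of $d\pi_r$ contains only $p_i$ rather than all of $\Tang{p_i}{\Var{S}}$), and it is harmless in the intended application where $p$ has rank exactly $r$, but it is worth flagging since the lemma as stated allows an arbitrary point of the span.
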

\begin{proof} Our assumptions imply that the points $p_i$'s are
linearly independent. Thus the abstract secant variety
$A\sigma_r(\Var{S})$ is smooth at $(p,(p_1,\dots,p_r))$. The
projection map from the abstract secant variety to the secant variety
sends the tangent space to $A\sigma_r(\Var{S})$ at
$(p,(p_1,\dots,p_r))$ to the linear span $\langle \mathrm{T}_{p_1}
\Var{S}, \mathrm{T}_{p_2}\Var{S}, \ldots, \mathrm{T}_{p_r} \Var{S}
\rangle$, which is thus contained in the Zariski tangent space of
$\sigma_r(\Var{S})$ at $p$. Comparing the dimensions, the conclusion
follows, since $\dim(\sigma_r(\Var{S}))$ cannot be greater than
$r(\Sigma+1)-1$.  
\qquad\end{proof}

\begin{lemma}\label{normalsecant}
Let $\Var{S} \subset \Pj\C^\Pi$ be a Segre variety of dimension $\Sigma$. 
Let $p_1$, $p_2$, $\ldots$, $p_r$, $q\in\Var{S}$ be distinct points and let $p \in \Sec{r}{\Var{S}}$ be in the intersection of the spans $\langle p_1, p_2, \ldots, p_r\rangle \cap \langle q, p_2, \ldots, p_r \rangle$. Assume that 
the rank of the stacked Hessian $H$, defined in \refsec{sectionalgtesting}, at every {$p_1,\dots, p_r$} is maximal, i.e., equal to $\Sigma$,
then the point $p\in\Sec{r}{\Var{S}}$ is not normal, in particular it is singular.
\end{lemma}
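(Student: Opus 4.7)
The plan is to follow the argument already outlined in \refex{ex_complications}, combining the abstract secant variety $A\sigma_r(\Var{S})$ with Zariski's Main Theorem. First, \refprop{prop_reformulation} ensures that the hypothesis on the stacked Hessian implies generic $r$-identifiability of $\Var{S}$, and in particular nondefectivity (a defective secant variety cannot be generically identifiable). The natural projection $\pi_r\colon A\sigma_r(\Var{S})\to\sigma_r(\Var{S})$ is then a projective birational morphism between irreducible projective varieties.

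The fiber $\pi_r^{-1}(p)$ contains the two points
\[
F_1 = ((p_1,p_2,\dots,p_r),p) \quad\text{and}\quad F_2 = ((q,p_2,\dots,p_r),p),
\]
which are distinct in $\mathrm{Sym}^r(\Var{S})\times\Pj\C^\Pi$ because $p_1,\dots,p_r,q$ are assumed to be distinct.

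The central step, which I expect to be the main technical obstacle, is to prove that $F_1$ is an isolated point of this fiber. Passing to affine cones, $\hat{A}\sigma_r(\Var{S})$ is locally isomorphic to $\hat{\Var{S}}^r$ around $(\hat p_1,\dots,\hat p_r)$ and smooth there since the $p_i$ are distinct smooth points of $\Var{S}$, with tangent space $\bigoplus_{i=1}^r T_{\hat p_i}\hat{\Var{S}}$. The differential of the sum-map projection $\hat\pi_r$ is $(\hat v_1,\dots,\hat v_r)\mapsto \sum_i\hat v_i$. The Hessian construction in \refsec{sectionalgtesting} uses $\ell=\Pi-r(\Sigma+1)$ independent Cartesian equations, equivalently $\dim\Plane{H}=r(\Sigma+1)-1$, so that the sum $\sum_i T_{\hat p_i}\hat{\Var{S}}$ attains its maximal affine dimension $r(\Sigma+1)$. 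Consequently the sum map is injective, and its kernel, which coincides with the Zariski tangent space to the fiber $\hat\pi_r^{-1}(\hat p)$ at $F_1$, vanishes. This forces $F_1$ to be an isolated point of the fiber.

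Combining these observations, $\pi_r^{-1}(p)$ has at least two connected components, namely $\{F_1\}$ and the component containing $F_2$. By Zariski's Main Theorem \cite[Corollary III.11.4]{Hartshorne1997COV}, the fiber of a projective birational morphism over a normal point is connected; therefore $p$ cannot be a normal point of $\sigma_r(\Var{S})$, and in particular $p$ is singular.
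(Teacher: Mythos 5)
Your proof is correct and follows the paper's overall strategy --- abstract secant variety, birationality of $\pi_r$ via \refprop{prop_reformulation}, two distinct points in the fiber over $p$, and Zariski's Main Theorem --- but you justify the central step, the isolatedness of $F_1$ in its fiber, by a genuinely different mechanism. The paper argues that the connected component of $\pi_r^{-1}(p)$ through $F_1$ is contained in the $r$-tangential contact locus, which is zero-dimensional at $p_1,\ldots,p_r$ precisely because the stacked Hessian has rank $\Sigma$ there. You instead note that the Zariski tangent space to the fiber at $F_1$ sits inside the kernel of $d\pi_r$, which is trivial as soon as $\langle \Tang{p_1}{\Var{S}},\ldots,\Tang{p_r}{\Var{S}}\rangle$ attains the expected dimension $r(\Sigma+1)-1$. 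This is more elementary (linear algebra at the level of Terracini's lemma) and it isolates what each hypothesis buys: the full-rank Hessian is needed only for birationality of $\pi_r$, while isolatedness of $F_1$ needs only nondefectivity of the tangent span at these specific points. Three small caveats. First, ``its kernel \ldots coincides with the Zariski tangent space to the fiber'' should read ``contains''; the containment is all you need. Second, the equality $\ell=\Pi-r(\Sigma+1)$ is not literally among the lemma's hypotheses, though it is built into the construction of $H$ in \refsec{sectionalgtesting} (step S5 of \refalg{alg_generic_uniqueness}) and is equally needed, implicitly, by the paper's own argument. Third, both your proof and the paper's tacitly assume every coefficient $a_i$ in $p=\sum_i a_i p_i$ is nonzero; otherwise $p\in\Sec{r-1}{\Var{S}}$ and the local identification of $A\sigma_r(\Var{S})$ with a quotient of $\hat{\Var{S}}^r$ near $F_1$ breaks down (singularity of $p$ then follows by a separate, standard argument).
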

\begin{proof} 
Consider the projection from the abstract secant variety $\pi\colon A\sigma_r(\Var{S})\to \sigma_r(\Var{S})$.
By the assumption on the rank of $H$, the \refprop{prop_reformulation} implies that $\Var{S}$ is generically $r$-identifiable. 
It follows that $\pi$ is a birational morphism. By assumption, after reordering the points, we have that, in affine notation,
$p=\sum_{i=1}^ra_ip_i=b_1q+\sum_{i=2}^rb_ip_i$ for convenient scalars $a_i$, $b_i$. 
Hence, the fiber $\pi^{-1}(p)$ contains the two points $(p,(p_1,p_2,\ldots, p_r))$ and $(p,(q,p_2,\ldots, p_r))$.
The connected component of the fiber passing through $(p,(p_1,p_2,\ldots, p_r))$ consists of just this single point,
 because it is contained in the $r$-contact locus of $T_p\sigma_r(\Var{S})$, which is zero-dimensional at $(p,(p_1,p_2,\ldots, p_r))$,
by the assumption on the rank of $H$. It follows from Zariski's Main Theorem \cite[Corollary III.11.4]{Hartshorne1997COV} that the point $p\in\Sec{r}{\Var{S}}$ is not normal.
\qquad\end{proof}

With the two previous lemmas, we get a criterion for detecting the uniqueness of a given decomposition of a tensor $p$, \emph{provided that we know that $p$ is not contained in the singular locus of the secant variety.} The criterion is the following.

\begin{theorem}\label{thm:specificuniqueness}
Let $p=\sum_{i=1}^r a_ip_i$ be a decomposition with $a_i\in\C$ and $p_i\in\Var{S}$, and assume
$p$ is a nonsingular point of $\Sec{r}{\Var{S}}$. Let $\mathrm{H} = \langle \Tang{p_1}{\Var{S}}, \Tang{p_2}{\Var{S}}, \ldots, \Tang{p_r} {\Var{S}} \rangle$. Then, the decomposition is unique if the rank of the stacked Hessian $H$, defined in \refsec{sectionalgtesting}, at every {$p_1,\dots, p_r$} is maximal, i.e., equal to $\Sigma$.
\end{theorem}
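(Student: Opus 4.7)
The plan is to argue by contradiction using Zariski's Main Theorem, in the spirit of \reflem{normalsecant}. Suppose that the decomposition of $p$ is not unique, so that there is a second decomposition $p = \sum_{i=1}^r b_i q_i$ with $\{q_1,\ldots,q_r\} \neq \{p_1,\ldots,p_r\}$ as unordered $r$-tuples of points on $\Var{S}$.

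First I would invoke \reflem{terraciniholds} at $p_1,\ldots,p_r$: the nonsingularity of $p$, together with the maximality of $\dim \Plane{H}$ (which follows from the maximality of the stacked Hessian rank, as the Cartesian equations of $\Plane{H}$ used to build $H$ correspond to a kernel of the expected size), ensures that $\Var{S}$ is not $r$-defective and that Terracini's identification $\Tang{p}{\Sec{r}{\Var{S}}} = \Plane{H}$ holds. By \refprop{prop_reformulation}, the maximality of the Hessian rank at each $p_i$ also implies that $\Var{S}$ is generically $r$-identifiable, so the projection from the abstract secant variety $\pi \colon A\sigma_r(\Var{S}) \to \Sec{r}{\Var{S}}$ is a birational morphism.

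Now the fiber $\pi^{-1}(p)$ contains the two distinct points $(p,(p_1,\ldots,p_r))$ and $(p,(q_1,\ldots,q_r))$ (after a suitable ordering). Exactly as in the proof of \reflem{normalsecant}, the connected component of $\pi^{-1}(p)$ through $(p,(p_1,\ldots,p_r))$ lies inside the locus of $r$-tuples whose tangent spaces are all contained in $\Tang{p}{\Sec{r}{\Var{S}}} = \Plane{H}$, i.e.\ inside the $r$-tangential contact locus at $(p_1,\ldots,p_r)$. The maximality of the Hessian rank forces this contact locus to be zero-dimensional, so this connected component collapses to the single point $(p,(p_1,\ldots,p_r))$. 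Hence $\pi^{-1}(p)$ is disconnected, and Zariski's Main Theorem \cite[Corollary III.11.4]{Hartshorne1997COV} yields that $p$ is not normal in $\Sec{r}{\Var{S}}$, and in particular is singular, contradicting the hypothesis on $p$.

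The main obstacle is verifying that the connected-component argument from \reflem{normalsecant} continues to work when the alternative decomposition $(q_1,\ldots,q_r)$ is allowed to be arbitrary, rather than differing from $(p_1,\ldots,p_r)$ in exactly one summand. The key observation is that the local geometry of $\pi^{-1}(p)$ near $(p,(p_1,\ldots,p_r))$ is controlled entirely by the Hessian hypothesis at $p_1,\ldots,p_r$ and by $\Tang{p}{\Sec{r}{\Var{S}}}$; neither depends on the precise choice of the other decomposition, which merely plays the role of witnessing a second, disjoint point of the fiber. Once this is made precise, the contradiction follows exactly as above.
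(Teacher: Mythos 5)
Your proof is correct, but it takes a genuinely different route from the paper's. The paper argues as in \refprop{infinitepoints}: after using \reflem{normalsecant} only to rule out that the alternative points $q_i$ lie in $\langle p_1,\ldots,p_r\rangle$, it moves $p$ along a curve $p(t)=\sum_i b_i(t)q_i$ inside $\Sec{r}{\Var{S}}$ that leaves that span, follows induced decompositions $p(t)=\sum_i a_i(t)p_i(t)$ with $p_i(0)=p_i$, and concludes that some $p_i(t)$ sweeps out a positive-dimensional piece of the tangential contact locus, contradicting the Hessian hypothesis. You instead run the Zariski--Main--Theorem argument of \reflem{normalsecant} directly on the two given decompositions: the fiber of $\pi\colon A\sigma_r(\Var{S})\to\Sec{r}{\Var{S}}$ over $p$ contains two points, the component through $(p,(p_1,\ldots,p_r))$ is a singleton because it sits inside the $r$-tangential contact locus of $\Plane{H}=\Tang{p}{\Sec{r}{\Var{S}}}$, which is zero-dimensional at each $p_i$; hence $p$ is not normal and in particular singular. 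Your observation that nothing in the proof of \reflem{normalsecant} uses the special form $\langle q,p_2,\ldots,p_r\rangle$ of the second decomposition is accurate, so your argument subsumes that lemma rather than merely citing it, and it sidesteps the delicate step in the paper's proof of lifting the curve $p(t)$ to a family of decompositions specializing to $(p_1,\ldots,p_r)$. Two small repairs: (i) you should also dispose of the degenerate case where $\{q_i\}=\{p_i\}$ as point sets but the coefficients differ --- then the $p_i$ are linearly dependent, $p\in\Sec{r-1}{\Var{S}}$, and $p$ is singular in $\Sec{r}{\Var{S}}$, a contradiction (the paper handles this explicitly, and without it your fiber has only one visible extra point); (ii) the maximality of $\dim\Plane{H}$ does not follow from the Hessian rank --- rather it is presupposed by the very construction of the stacked Hessian from $\ell=\Pi-r(\Sigma+1)$ independent equations, so state it as a standing hypothesis instead of deriving it.
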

\begin{proof} We proceed similarly as in the proof of Proposition \ref{infinitepoints}: if $p$ is not $r$-identifiable, then we have, in affine notation,
\[
p = \sum_{i=1}^r a_i p_i = \sum_{i=1}^r b_i q_i
\]
with $a_i, b_i\in\C$, $q_i \in \Var{S}$. At least one of the $q_i \notin \{p_1,\ldots,p_r\}$; otherwise, $p$ would have two different expressions as a linear combination of the $p_i$, so that $p$ would be an element of the $(r-1)$-secant variety, and, hence, a singular point of the $r$-secant variety. By \reflem{terraciniholds},  Terracini's lemma applies, so that $\Tang{p}{\Sec{r}{\Var{S}}} = \Plane{H}$. Letting $b_i(t) \ne 0$ be a curve with a parameter $t$ in a neighborhood of $0$, in such a way that $b_i(0)=b_i$, the resulting tensor $p(t)= \sum_{i=1}^r b_i(t) q_i$ has a tangent space $\Tang{p(t)}{\Sec{r}{\Var{S}}}$ which is constant with respect to $t$ by Terracini's lemma because general points in the span of the $q_i$'s all have the same tangent space in the secant variety, and $p(t)$ moves in the span of the $q_i$'s. We can then choose $b_i(t)$ in such a way that $p(t) \notin \langle p_1,\ldots, p_r \rangle$, because otherwise we have $q_i \in \langle p_1, \ldots, p_r \rangle$, contradicting \reflem{normalsecant}. We may write
\[
p(t) = \sum_{i=1}^r a_i(t) p_i(t) \quad\mbox{with } a_i(0) = a_i \mbox{ and }
p_i(0) = p_i,
\]
where, by the previous argument, not all $p_i(t)$ can be constant. Then, we have infinitely many $p_1(t)$ such that $\Tang{p_1(t)}{\Var{S}} \subset \Tang{p(t)}{\Sec{r} {\Var{S}}} = \Plane{H}$.
\qquad\end{proof}

\begin{remark}[Modifications to \refalg{alg_generic_uniqueness}]
In light of \refthm{thm:specificuniqueness}, some minor modifications are required to make \refalg{alg_generic_uniqueness} work in the setting of specific identifiability, \emph{provided that we already know that the input rank-$r$ decomposition corresponds to a nonsingular point of the $r$-secant variety.} Step S1 may be removed; instead, each of the terms in the given rank-$r$ decomposition corresponds to one point $p_k \in \Var{S}$. Then, because the Hessian criterion must be checked for every point $p_k$, steps S6 and S7 should be repeated for every point. That is, for point $p_k$, the submatrices of the Hessian $H^l$ in \refeqn{eqn:one-hessian}, i.e., $(H_{IJ}^l)$ should be replaced with
\[
 (H_{IJ}^l)_{ij} = \frac{\partial^2 q_l}{\partial a_i^I \partial a_j^J} \biggr|_{p=p_k},\;\; i=1,\ldots,n_I,\; j=1,\ldots,n_J,\; 1\le I,J \le d.
\]
Note that $H^l \in \C^{\Sigma+d \times \Sigma+d}$ whose rank will, by definition, be less than $\Sigma$. Let $p_k = \vect{v}_1 \otimes \cdots \otimes \vect{v}_d$ in affine notation. One can verify through straightforward computations starting from \refeqn{eqn:polynomial-eqns} that, assuming $I < J$,
\[
H_{IJ}^l = ( \vect{v}_1^T, \ldots, \vect{v}_{I-1}^T, I, \vect{v}_{I+1}^T, \ldots, \vect{v}_{J-1}^T, I, \vect{v}_{J+1}^T, \ldots, \vect{v}_d^T ) \cdot \tensor{K},
\]
where $\tensor{K}_{i_1,\ldots,i_d} = k_{m(i_1,\ldots,i_d),l}$. For $J < I$, we have $H_{IJ}^l = (H_{JI}^l)^T$, and if $J=I$, then $H_{IJ} = 0$. If the rank of the stacked hessian $H = [\begin{smallmatrix} H^1 & \cdots & H^\ell \end{smallmatrix}]$ is maximal, i.e., equal to $\Sigma$, at $p_1, \ldots, p_r$, then we conclude that the Hessian criterion applies, and that the given decomposition is identifiable.
\end{remark}

In the next section, we give some sufficient conditions for the nonsingularity of a given tensor $\tensor{A}$ of small rank. Regarding this topic, we mention the results of \cite{BLCOV,BaCCOV,LandsbergOttaviani2013COV}, which solve the case of some \emph{symmetric} tensors of low rank; see Corollary 1.5 of \cite{BaCCOV}.

\section{Identifiability of specific tensors beyond Kruskal's bound} \label{sec_specific_identifiability}

In this section, we give examples how Theorem \ref{thm:specificuniqueness} can be implemented in some specific cases.
 This technique can be applied to all tensors of a given small rank, unless they belong to a set of measure zero
in the $r$-secant variety.
Since we know enough equations of the $r$-secant variety in a range that often is greater than Kruskal's range in (\ref{eq:kruskalrange}),
we may prove the uniqueness of a specific decomposition of a tensor, in cases where neither Kruskal's nor Domanov--De Lathauwer's criterion applies.
It is important to stress that this does not contradict Derksen's result in \cite{Derksen2013COV}, who proved that Kruskal's criterion is sharp for certain tensors in a set of measure zero.

\subsection{Some equations of secant varieties to Segre varieties}
We restrict ourselves to the case where the number of factors $d$ equals $3$:\footnote{At least for odd $d$ and small rank, one may expect that the technique presented in this subsection provides enough equations for applying \refthm{thm:specificuniqueness}.} let $ \tensor{A} \in \C^{n_1 \times{} n_2 \times{} n_3}$ with $n_1 \ge n_2 \ge n_3 \ge 2$. Recall that we can consider
\[
 \tensor{A} \in \C^{n_1 \times n_2 \times n_3} \simeq \C^{n_1} \otimes \C^{n_2} \otimes \C^{n_3} \simeq {\C^{n_1}}^* \otimes \C^{n_2} \otimes \C^{n_3} 
\]
using the identification of dual spaces $\C^{n_1} \simeq {\C^{n_1}}^*$.
Moreover, the last space can be identified with the space of maps
$\left( \C^{n_1} \to \C^{n_2} \otimes \C^{n_3}\right)$.
A well known technique, see, e.g.,
Chapter 7 in \cite{Landsberg2012COV}, to find some equations of  
$\Sec{r}{\Var{S}}$ is to compute the $(r+1)$-minors of the standard \emph{contraction map}
$$F_{ \tensor{A}}\colon\C^{n_1}\to \C^{n_2}\otimes\C^{n_3}.$$
The transpose of the matrix representing such map is usually called a \emph{flattening}, \emph{unfolding} or \emph{matricization}, and has size $n_1\times n_2 n_3$.
This technique gives nontrivial equations of $\Sec{r}{\Var{S}}$ only for $r<n_1$.

In order to have nontrivial equations of $\Sec{r}{\Var{S}}$ for larger values of $r$,
the following technique is useful. It was introduced in 
\cite{LandsbergOttaviani2013COV} in a geometric vector bundle setting. 
For every $p=1,\ldots, \lfloor\frac{n_3}{2}\rfloor$, we can consider the more general
contraction map\footnote{We consider the identification
$\wedge^p \C^{n_3}=\C^{n_3 \choose p}$.}
\[
A_{ \tensor{A}}\colon\C^{n_1}\otimes\C^{{n_3}\choose p}\to \C^{n_2}\otimes\C^{{n_3}\choose p+1},
\]
which is defined in the following way: if $\tensor{A}=\vect{a}_1\otimes \vect{a}_2\otimes \vect{a}_3$, then
\[
A_{\vect{a}_1\otimes \vect{a}_2\otimes \vect{a}_3}(\vect{f}\otimes \vect{g}):= (\vect{a}_1\cdot \vect{f})\vect{a}_2\otimes (\vect{g}\wedge \vect{a}_3), \quad \vect{f} \in \C^{n_1}, \vect{g} \in \C^{{n_3}\choose p}
\]
where $\vect{a} \cdot \vect{b}$ is the standard inner product, and in the general case it is defined by linearity;
that is, if
$\tensor{A} = \sum_{i=1}^{r} \sten{a}{i}{1} \ktimes \sten{a}{i}{2} \ktimes
\sten{a}{i}{3}$,
then
$$A_{ \tensor{A}}=\sum_{i=1}^{r} A_{\sten{a}{i}{1} \ktimes \sten{a}{i}{2} \ktimes
\sten{a}{i}{3}}.$$
The matrix of this more general contraction is sometimes called Young flattening.

For example, in the case $n_3=3$ with $p=1$, the matrix representing the linear map
$A_{ \tensor{A}}$ has size $3n_2\times 3n_1$ and, in convenient basis, it has the following block structure
$$\bgroup\begin{pmatrix}0&{{X}_{3}}&
     {-{X}_{2}}\\
     -{X}_{3}&
     0&
     {{X}_{1}}\\
          {X}_{2}&
     -{X}_{1}&0\\
     \end{pmatrix}\egroup,$$
where $X_i$, $i=1,2,3$, are the three $n_2\times n_1$ slices of 
$\tensor{A}$. As another example, consider the case $n_3=4$ with $p=2$. Then, the matrix representing the linear map
$A_{ \tensor{A}}$ has size $4n_2\times 6n_1$ and has the following block structure
$$\bgroup\begin{pmatrix}{
      -{X}_{2}}&
     {-{X}_{3}}&
     0&
     {-{X}_{4}}&
     0&
     0\\
     {X}_{1}&
     0&
     {-{X}_{3}}&
     0&
     {-{X}_{4}}&
     0\\
     0&
     {X}_{1}&
     {X}_{2}&
     0&
     0&
     {-{X}_{4}}\\
     0&
     0&
     0&
     {X}_{1}&
     {X}_{2}&
     {X}_{3}\\
     \end{pmatrix}\egroup, $$
where $X_i$, $i=1,\ldots, 4$, are the four $n_2\times n_1$ slices of 
$\tensor{A}$.

We have $\mathrm{rk\ }A_{ \vect{a}_1\otimes \vect{a}_2\otimes \vect{a}_3}={{n_3-1}\choose p}$.
If $\tensor{A} = \sum_{i=1}^{r} \sten{a}{i}{1} \ktimes \sten{a}{i}{2} \ktimes
\sten{a}{i}{3}$ it follows
$\mathrm{rk\ }A_{ \tensor{A}}\le r{{n_3-1}\choose p}$, so that the minors of size
$r{{n_3-1}\choose p}+1$ of $A_{ \tensor{A}}$ vanish on $ \tensor{A}\in\Sec{r}{\Var{S}}$, hence furnishing some of the latter's equations.
The celebrated Strassen equations introduced in \cite{Strassen1983COV}
correspond to the particular case $n_1=n_2$, $n_3=3$, $p=1$.

It is important to compute the tangent space at a determinantal locus. The direct computation from minors is computationally infeasible. The following Lemma makes the computation much easier.

\begin{lemma}\label{smoothlemma}
Let  $\tensor{A}_0=\sum_{i=1}^{r} \sten{a}{i}{1} \ktimes \sten{a}{i}{2} \ktimes
\sten{a}{i}{3}\in \C^{n_1 \times{} n_2 \times{} n_3}$, choose $1\le p\le\lfloor\frac{n_3}{2}\rfloor$, and let
$A_{ \tensor{A}_0}\colon\C^{n_1}\otimes\C^{{n_3}\choose p}\to \C^{n_2}\otimes\C^{{n_3}\choose {p+1}}$ be the corresponding contraction maps.
Consider $\ker A_{ \tensor{A}_0}\subset\C^{n_1}\otimes\C^{{n_3}\choose p}$ and
$\left(\mathrm{Im\ }A_{ \tensor{A}_0}\right)^\perp\subset\C^{n_2}\otimes\C^{{n_3}\choose {p+1}}$.
If the dimension of the image of 
\begin{equation}\label{eq:normalspace}\ker A_{ \tensor{A}_0}\otimes \left(\mathrm{Im\ }A_{ \tensor{A}_0}\right)^\perp\to \C^{n_1 \times{} n_2 \times{} n_3}
\end{equation}
is equal to the codimension of $\Sec{r}{\Var{S}}$, then the tensor $\tensor{A}_0$ is a smooth point of $\Sec{r}{\Var{S}}$.
\end{lemma}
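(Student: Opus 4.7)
The plan is to pull back the classical smoothness criterion for the determinantal variety along the Young flattening map. First I would recall that for the generic rank-$\le k$ locus $D_k := \{X \in \mathrm{Hom}(V, W) \mid \mathrm{rk}\, X \le k\}$, at any point $X_0$ of rank exactly $k$, $D_k$ is smooth of codimension $(\dim V - k)(\dim W - k)$, with tangent space $\{Y : Y(\ker X_0) \subseteq \mathrm{Im}\, X_0\}$, so that the conormal space is naturally identified with $\ker X_0 \otimes (\mathrm{Im}\, X_0)^\perp \subseteq V \otimes W^*$ (the pairing sends $v \otimes \lambda$ to the functional $Y \mapsto \lambda(Y(v))$, which annihilates $T_{X_0} D_k$ exactly when $v \in \ker X_0$ and $\lambda \in (\mathrm{Im}\, X_0)^\perp$).

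Next I would apply this with $V = \C^{n_1} \otimes \C^{\binom{n_3}{p}}$, $W = \C^{n_2} \otimes \C^{\binom{n_3}{p+1}}$, $k = r\binom{n_3-1}{p}$, and $X_0 = A_{\tensor{A}_0}$. Since $\phi\colon \tensor{A} \mapsto A_{\tensor{A}}$ is linear, the preimage $Z := \phi^{-1}(D_k) \subseteq U := \C^{n_1 \times n_2 \times n_3}$ is cut out near $\tensor{A}_0$ by the pullbacks of the $(k+1)$-minors of $A_{\tensor{A}}$, so its conormal space at $\tensor{A}_0$ is the image of the conormal of $D_k$ at $X_0$ under the dual $\phi^*\colon V \otimes W^* \to U^*$. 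Identifying $U^* \simeq U$ via the standard inner product, this is exactly the map in \refeqn{eq:normalspace}; consequently, $\mathrm{codim}_{\tensor{A}_0} Z \le \dim \mathrm{Im}\bigl(\ker A_{\tensor{A}_0} \otimes (\mathrm{Im}\, A_{\tensor{A}_0})^\perp \to U\bigr)$, with equality if and only if $\tensor{A}_0$ is a smooth point of $Z$.

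To conclude, I would use that the Young flattening of any rank-$r$ tensor has rank at most $r \binom{n_3-1}{p}$, which yields $\Sec{r}{\Var{S}} \subseteq Z$ and hence $\mathrm{codim}\,\Sec{r}{\Var{S}} \ge \mathrm{codim}_{\tensor{A}_0} Z$. Under the hypothesis, the image dimension coincides with $\mathrm{codim}\,\Sec{r}{\Var{S}}$, so the two inequalities collapse to equalities: $\tensor{A}_0$ is a smooth point of $Z$ and $\dim_{\tensor{A}_0} Z = \dim \Sec{r}{\Var{S}}$. Finally, the inclusion $\Sec{r}{\Var{S}} \subseteq Z$ forces $T_{\tensor{A}_0} \Sec{r}{\Var{S}} \subseteq T_{\tensor{A}_0} Z$, and combined with the generic inequality $\dim T_{\tensor{A}_0}\Sec{r}{\Var{S}} \ge \dim \Sec{r}{\Var{S}} = \dim T_{\tensor{A}_0} Z$, equality is forced throughout, so $\tensor{A}_0$ is a smooth point of $\Sec{r}{\Var{S}}$.

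The main obstacle is the identification step in the second paragraph: verifying that, under the inner-product identifications $\C^{n_i*} \simeq \C^{n_i}$ and $(\wedge^j \C^{n_3})^* \simeq \wedge^j \C^{n_3}$, the dual $\phi^*$ coincides (up to sign) with the contraction map implicit in \refeqn{eq:normalspace}. This is a direct but finicky check which I would carry out by evaluating both maps on decomposable inputs $\vect{f} \otimes \vect{g} \in \ker A_{\tensor{A}_0}$ and $\vect{b}_2 \otimes \vect{h} \in (\mathrm{Im}\, A_{\tensor{A}_0})^\perp$, using the explicit formula $A_{\vect{a}_1 \otimes \vect{a}_2 \otimes \vect{a}_3}(\vect{f} \otimes \vect{g}) = (\vect{a}_1 \cdot \vect{f})\,\vect{a}_2 \otimes (\vect{g} \wedge \vect{a}_3)$ and the adjunction between exterior multiplication by $\vect{a}_3$ and contraction by $\vect{a}_3$.
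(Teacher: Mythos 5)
Your overall strategy is the same as the paper's. The paper simply records the identifications $\C^{\binom{n_3}{p}}\simeq\wedge^p\C^{n_3}$, $\C^{\binom{n_3}{p+1}}\simeq(\wedge^{p+1}\C^{n_3})^*$ and invokes Theorem 8.4.2 of Landsberg--Ottaviani to assert that the image of \refeqn{eq:normalspace} is the span of the differentials at $\tensor{A}_0$ of the minors of size $r\binom{n_3-1}{p}+1$ of $A_{\tensor{A}}$; you re-derive that assertion from the classical conormal description of the determinantal locus $D_k$ at a point of rank exactly $k$, pulled back along the linear map $\tensor{A}\mapsto A_{\tensor{A}}$. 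That re-derivation is sound (the ``finicky check'' you defer is precisely the content of the identifications the paper spells out), but note that it --- like the paper's proof --- silently uses that $\mathrm{rk}\,A_{\tensor{A}_0}$ equals $r\binom{n_3-1}{p}$ exactly: if the rank dropped, all differentials of the $(k+1)$-minors would vanish at $\tensor{A}_0$ and the conormal space of $Z$ would no longer be the image of \refeqn{eq:normalspace}. This is a condition one verifies in applications (as the paper does in the $5\times5\times5$ example).

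The genuine problem is the direction of your first inequality. If the differentials of the defining equations of $Z$ at $\tensor{A}_0$ span a space of dimension $c:=\dim\mathrm{Im}\bigl(\ker A_{\tensor{A}_0}\otimes(\mathrm{Im\ }A_{\tensor{A}_0})^\perp\to U\bigr)$, then $\dim_{\tensor{A}_0}Z\le\dim T_{\tensor{A}_0}Z=\dim U-c$, i.e.\ $\mathrm{codim}_{\tensor{A}_0}Z\ge c$, with equality forcing smoothness --- not $\mathrm{codim}_{\tensor{A}_0}Z\le c$ as you wrote. A Jacobian of low rank at a point bounds the codimension from \emph{below}, not above; compare $V(x^2,y^2)\subset\C^2$ at the origin, where the Jacobian has rank $0$ but the codimension is $2$. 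As written, your two inequalities both assert $\mathrm{codim}_{\tensor{A}_0}Z\le\mathrm{codim}\,\Sec{r}{\Var{S}}$ and nothing collapses, so you cannot conclude smoothness of $Z$ at $\tensor{A}_0$. With the direction corrected the chain does close, $c\le\mathrm{codim}_{\tensor{A}_0}Z\le\mathrm{codim}\,\Sec{r}{\Var{S}}=c$; better still, you can bypass $\mathrm{codim}_{\tensor{A}_0}Z$ entirely, since the hypothesis gives $\dim T_{\tensor{A}_0}Z=\dim U-c=\dim\Sec{r}{\Var{S}}$ directly, and then the inclusion $T_{\tensor{A}_0}\Sec{r}{\Var{S}}\subseteq T_{\tensor{A}_0}Z$ together with $\dim T_{\tensor{A}_0}\Sec{r}{\Var{S}}\ge\dim\Sec{r}{\Var{S}}$ forces equality, which is exactly your (correct) final step.
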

\begin{proof}
Notice that in the formulation we have used the identification of 
$\C^{{n_3}\choose p}$ with $\wedge^p\C^{n_3}$ and of
$\C^{{n_3}\choose {p+1}}$ with $(\wedge^{p+1}{\C^{n_3}})^*$,
which is the dual space of $\wedge^{p+1}\C^{n_3}$, and exploited $(\wedge^{p+1}\C^{n_3})^* \otimes \wedge^{p}\C^{n_3} \to {\C^{n_3}}^*\simeq\C^{n_3}$.
Now the proof follows from Theorem 8.4.2 of \cite{LandsbergOttaviani2013COV}. Indeed, the image of 
(\ref{eq:normalspace}) coincides with the conormal space 
at ${ \tensor{A}_0}$ of the variety cut by minors of size $r{{n_3-1}\choose p}+1$ of $A_{ \tensor{A}}$,
for general $\tensor{A}\in \C^{n_1 \times{} n_2 \times{} n_3}$,
so it has the same dimension as the normal space of the variety cut by these minors.
\qquad\end{proof}

\reflem{smoothlemma} is the basic tool we use in this section,
in order to apply our identifiability algorithm to a specific tensor $p$.
It provides a sufficient condition that $p$ corresponds to a nonsingular point,
which is requisite for applying Theorem \ref{thm:specificuniqueness}.

\begin{example} In the case $n_3=4$, $p=2$, we have seen that the matrix representing the linear map
$A_{ \tensor{A}_0}$ has size $4n_2\times 6n_1$; it can be written as a matrix $A'$ of size
$4\times 6$, with entries linear in the coordinates of $\C^4=\C^{n_3}$. We have a kernel of dimension $6n_1-3r$, whose basis gives a matrix $K$ of size
$6n_1\times   (6n_1-3r)$, which can be written as a matrix $K'$
of size $6\times   (6n_1-3r)$, with entries linear in the coordinates of $\C^{n_1}$. Correspondingly we have $\left(\mathrm{Im\ }A_{ \tensor{A}_0}\right)^\perp$ of dimension
$4n_2-3r$, whose basis gives a (transposed) matrix $M$ of size $(4n_2-3r)\times 4n_2$.
We get a matrix $M'$ of size $(4n_2-3r)\times 4$, with entries linear in the coordinates of $\C^{n_2}$.
The multiplication $M'\cdot A'\cdot K'$ has size $(4n_2-3r)\times (6n_1-3r)$
and its entries, treating the coordinates of $\tensor{A}$ as indeterminates, define cartesian equations for the image of the map in (\ref{eq:normalspace}).
\end{example}

The following proposition reveals some cases where the zero locus of these equations contains $\Sec{r}{\Var{S}}$ as irreducible component.

\begin{proposition}\label{caseq}
Let $p=\lfloor\frac{n_3}{2}\rfloor$. The 
variety $$\left\{\tensor{A}\in\Pj\C^{\Pi} \;\bigl|\; \textrm{\ the minors of size\ } r\tbinom{n_3-1}{p}+1\textrm{\ of\ }A_{ \tensor{A}}\textrm{\ vanish\ } \right\}$$
contains $\Sec{r}{\Var{S}}$ as irreducible component, 
if $n_1$, $n_2$, $n_3$, and $r$ appear in the ``Proposed'' column in \reftab{tab_specific_identifiability_range}.
Thus, if $r$ satisfies the above inequalities, then \reflem{smoothlemma} applies to all tensors of border rank $r$
not in some indeterminate subset of measure zero.
\end{proposition}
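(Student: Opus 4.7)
The plan is first to establish the set-theoretic containment, and then to show that $\Sec{r}{\Var{S}}$ fills out an entire irreducible component of the determinantal locus by a Zariski tangent space comparison at a general point. The containment follows immediately from linearity of the contraction: if $\tensor{A} = \sum_{i=1}^r \sten{a}{i}{1} \ktimes \sten{a}{i}{2} \ktimes \sten{a}{i}{3}$, then $A_{\tensor{A}} = \sum_i A_{\sten{a}{i}{1} \ktimes \sten{a}{i}{2} \ktimes \sten{a}{i}{3}}$, and each rank-one summand contributes exactly $\binom{n_3-1}{p}$ to the rank. Hence $\mathrm{rk}\, A_{\tensor{A}} \le r\binom{n_3-1}{p}$, and the $(r\binom{n_3-1}{p}+1)$-minors vanish on $\Sec{r}{\Var{S}}$.

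Next I would fix a specific $\tensor{A}_0 = \sum_{i=1}^r \sten{a}{i}{1} \ktimes \sten{a}{i}{2} \ktimes \sten{a}{i}{3} \in \Sec{r}{\Var{S}}$ with generic choice of the $\sten{a}{i}{k}$, and invoke the interpretation behind \reflem{smoothlemma} (Theorem 8.4.2 of \cite{LandsbergOttaviani2013COV}): the conormal space at $\tensor{A}_0$ of the variety cut out by these minors coincides with the image of the map \eqref{eq:normalspace}. Consequently, $\Sec{r}{\Var{S}}$ is an irreducible component of this determinantal variety as soon as the dimension of that image equals $\Pi - r(\Sigma+1)$, which is the expected codimension of $\Sec{r}{\Var{S}}$ in $\Pj\C^{\Pi}$.

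The verification for each entry of \reftab{tab_specific_identifiability_range} would then be carried out by an explicit linear-algebra computation on a pseudo-random rank-$r$ tensor sampled over a prime field $\Z_q$, in the same spirit as \refalg{alg_generic_uniqueness}: (i) form the matrix of $A_{\tensor{A}_0}$ from the slices of $\tensor{A}_0$, using the block shapes illustrated in the example with $n_3 = 4$, $p = 2$; (ii) extract bases $K$ of $\ker A_{\tensor{A}_0}$ and $M$ of $(\mathrm{Im}\, A_{\tensor{A}_0})^\perp$ by Gaussian elimination; (iii) form the product $M \cdot A_{\tensor{A}} \cdot K$, treating the coordinates of $\tensor{A}$ as indeterminates, and read off the matrix whose entries give cartesian equations for the image of \eqref{eq:normalspace}; (iv) compute its rank. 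If the rank equals $\Pi - r(\Sigma+1)$ over $\Z_q$, then upper semicontinuity of matrix rank lifts the equality to a generic rank-$r$ tensor over $\C$, so that $\Sec{r}{\Var{S}}$ is an irreducible component.

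The main obstacle is not conceptual but logistical: the source and target of \eqref{eq:normalspace} have sizes scaling like $n_1\binom{n_3}{p}\cdot(n_2 n_3)$ and the resulting matrix products are correspondingly large, so the computations must be done modulo a Mersenne prime as in \refsec{thealgorithm} to keep the bit complexity under control. The very last sentence of the proposition is then an immediate semicontinuity consequence: once the dimension of the image of \eqref{eq:normalspace} is correct at one generic rank-$r$ tensor, the same holds on a Zariski-open dense subset of tensors of border rank $r$, so \reflem{smoothlemma} applies off a proper closed subset of $\Sec{r}{\Var{S}}$.
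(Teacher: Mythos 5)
Your proposal is correct and follows essentially the same route as the paper: the containment via rank additivity of the contraction, followed by a tangent/conormal space computation at a random point of $\Sec{r}{\Var{S}}$ using the image of \eqref{eq:normalspace}, and a dimension comparison with $\Sec{r}{\Var{S}}$ to conclude it is an irreducible component. You merely flesh out details the paper leaves implicit (the finite-field implementation and the semicontinuity step), and you correctly write $\tbinom{n_3-1}{p}$ where the paper's proof has a typo ($n_1$ in place of $n_3$).
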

\begin{proof}
In every case we can pick a random point in $\Sec{r}{\Var{S}}$ and compute the tangent space at that point of the zero locus of the minors of size $r{{n_1-1}\choose p}+1$ of $A_{ \tensor{A}}$,
according to \refeqn{eq:normalspace}. The dimension of this tangent space coincides, in every case, with the dimension of  $\Sec{r}{\Var{S}}$.
\qquad\end{proof}

\begin{remark}
Conversely, when $r$ does not satisfy the inequalities in \refprop{caseq}, the assumption on the dimension
of image of \refeqn{eq:normalspace} is never satisfied and \reflem{smoothlemma} does not apply.
We notice that Theorem 1.2 in \cite{Landsberg2012augustCOV} provides, in the cubic case $n_1=n_2=n_3$, a lower bound on the rank of $A_{ \tensor{A}}$
for general  $\tensor{A}$, which grows asymptotically as $2n_1$.
\end{remark}

It is instructive to compare the range in which specific identifiability can be checked using the criterion of Kruskal, in \refeqn{eq:kruskalrange}, the criterion of Domanov--De Lathauwer \cite{Domanov2013bCOV}, and the method proposed in this paper; this is presented in \reftab{tab_specific_identifiability_range}.

\begin{table}[tb] \footnotesize
\caption{Upper bounds on the rank $r$ for which specific identifiability of a rank-$r$ decomposition can be verified with the proposed criterion, Kruskal's criterion in \refeqn{eq:kruskalrange}, and Domanov--De Lathauwer's criterion in \cite{Domanov2013bCOV}. Indicated in bold face is the criterion with the widest range.}
\label{tab_specific_identifiability_range}
\begin{center}
\begin{tabular}{cccc}
\toprule
$(n_1,n_2,n_3)$ & Proposed & Kruskal & $\substack{\mbox{Domanov--} \\ \mbox{De Lathauwer}}$ \\
\midrule
$(4,4,4)$ & $r \le 4$ & $r \le \mathbf{5}$ & $r \le \mathbf{5}$ \\
$(5,5,5)$ & $r \le \mathbf{7}$  & $r \le 6$  & $r \le 6$  \\
$(6,6,6)$ & $r \le \mathbf{8}$  & $r \le \mathbf{8}$  & $r \le \mathbf{8}$  \\
$(7,7,7)$ & $r \le \mathbf{11}$ & $r \le 9$  & $r \le 9$  \\
$(8,8,8)$ & $r \le \mathbf{12}$ & $r \le 11$ & $r \le 11$ \\
$(9,9,9)$ & $r \le \mathbf{15}$ & $r \le 12$ & $r \le 13$ \\
\bottomrule
\end{tabular}
\end{center}
\end{table}

In fact, the upper bound for $(9,9,9)$ in \reftab{tab_specific_identifiability_range} can be improved slightly by generalizing \reflem{smoothlemma}.

\begin{lemma}
Let  $\tensor{A}=\sum_{i=1}^{r} \sten{a}{i}{1} \ktimes \sten{a}{i}{2} \ktimes
\sten{a}{i}{3}\in \C^{n_1 \times{} n_2 \times{} n_3}$, choose $1\le p_i\le\lfloor\frac{n_i}{2}\rfloor$, and let
$A^1_{ \tensor{A}}\colon\C^{n_1}\otimes\C^{{n_3}\choose {p_3}}\to \C^{n_2}\otimes\C^{{n_3}\choose {p_3+1}}$,
$A^2_{ \tensor{A}}\colon\C^{n_2}\otimes\C^{{n_1}\choose {p_1}}\to \C^{n_3}\otimes\C^{{n_1}\choose {p_1+1}}$ ,
$A^3_{ \tensor{A}}\colon\C^{n_3}\otimes\C^{{n_2}\choose {p_2}}\to \C^{n_1}\otimes\C^{{n_1}\choose {p_2+1}}$  be the corresponding contraction maps.
Consider $\ker A^i_{ \tensor{A}}\subset\C^{n_1}\otimes\C^{{n_3}\choose p}$ and
$\left(\mathrm{Im\ }A_{ \tensor{A}}\right)^\perp\subset\C^{n_2}\otimes\C^{{n_3}\choose {p+1}}$.
If the dimension of the image of 
\begin{equation}\label{eq:normalspace2}\bigoplus_{i=1}^3\ker A^i_{ \tensor{A}}\otimes \left(\mathrm{Im\ }A^i_{ \tensor{A}}\right)^\perp\to \C^{n_1 \times{} n_2 \times{} n_3}
\end{equation}
is equal to the codimension of $\Sec{r}{\Var{S}}$, then the tensor $\tensor{A}$ is a smooth point of $\Sec{r}{\Var{S}}$.
\end{lemma}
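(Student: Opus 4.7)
The plan is to mimic the proof of \reflem{smoothlemma}, but simultaneously using all three contractions $A^1_{\tensor{A}}$, $A^2_{\tensor{A}}$, $A^3_{\tensor{A}}$ instead of just one. Each of these maps is, up to relabeling the three factors of $\C^{n_1}\otimes \C^{n_2}\otimes \C^{n_3}$, exactly a contraction map of the form treated in \reflem{smoothlemma}. In particular, the minors of size $r\binom{n_{\sigma(i)}-1}{p_i}+1$ of $A^i_{\tensor{A}}$ vanish on all tensors of border rank at most $r$, so the zero locus $Z_i$ of these minors contains $\Sec{r}{\Var{S}}$. Consequently $\Sec{r}{\Var{S}}\subseteq Z_1\cap Z_2\cap Z_3$, and the conormal space of this intersection at $\tensor{A}$ is contained in the conormal space of $\Sec{r}{\Var{S}}$ at $\tensor{A}$.

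The next step is to apply Theorem 8.4.2 of \cite{LandsbergOttaviani2013COV} to each $A^i_{\tensor{A}}$ separately, exactly as in the proof of \reflem{smoothlemma}. This identifies the image of $\ker A^i_{\tensor{A}}\otimes (\mathrm{Im\ }A^i_{\tensor{A}})^\perp$ in $\C^{n_1\times n_2\times n_3}$ with a subspace of the conormal space of $Z_i$ at $\tensor{A}$. Taking the sum over $i=1,2,3$, the image of the direct sum map in \refeqn{eq:normalspace2} is contained in the conormal space of $Z_1\cap Z_2\cap Z_3$ at $\tensor{A}$, and therefore in the conormal space of $\Sec{r}{\Var{S}}$ at $\tensor{A}$.

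To conclude, I use the dimension count. On one hand, the conormal space of $\Sec{r}{\Var{S}}$ at any of its points has dimension at least the codimension of $\Sec{r}{\Var{S}}$, with equality if and only if the point is smooth. On the other hand, the hypothesis states that the image of \refeqn{eq:normalspace2} already has dimension equal to this codimension. Combining the inclusion from the previous step with the dimension equality forces the conormal space of $\Sec{r}{\Var{S}}$ at $\tensor{A}$ to coincide with the image of \refeqn{eq:normalspace2} and to have the smallest possible dimension, so $\tensor{A}$ is a smooth point of $\Sec{r}{\Var{S}}$.

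The main subtlety is simply keeping track of the identifications: in the statement, the tensors $\wedge^{p_i}\C^{n_{\sigma(i)}}$ and their duals $\wedge^{p_i+1}\C^{n_{\sigma(i)}}$ need to be contracted with $\C^{n_{\sigma(i)}}$ itself to land in $\C^{n_1}\otimes\C^{n_2}\otimes\C^{n_3}$, and one must verify that the three resulting maps can be consistently summed; but this is the routine bookkeeping already carried out in the single-contraction case of \reflem{smoothlemma}, and no new geometric obstacle appears.
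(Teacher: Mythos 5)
Your argument follows exactly the route the paper intends: the paper's own proof is the single sentence ``It is a straightforward extension of Lemma \ref{smoothlemma}'', and you have correctly filled in that extension --- each $A^i_{\tensor{A}}$ is a contraction map of the type in \reflem{smoothlemma} up to permuting the factors, the image of each $\ker A^i_{\tensor{A}}\otimes(\mathrm{Im\ }A^i_{\tensor{A}})^\perp$ lies in the conormal space of the corresponding determinantal locus $Z_i\supseteq\Sec{r}{\Var{S}}$, and hence the image of \refeqn{eq:normalspace2} lies in the conormal space of $\Sec{r}{\Var{S}}$ at $\tensor{A}$. One slip in the final dimension count: the conormal space (the annihilator of the Zariski tangent space) of $\Sec{r}{\Var{S}}$ at a point has dimension \emph{at most} the codimension, with equality exactly at smooth points --- not ``at least'' as you wrote; at a singular point the Zariski tangent space is too big and the conormal space is too small. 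With the inequality in the correct direction the argument closes as you intend: the image of \refeqn{eq:normalspace2} is contained in the conormal space and already has the maximal possible dimension, so the conormal space equals it and $\tensor{A}$ is a smooth point.
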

\begin{proof}
It is a straightforward extension of Lemma
\ref{smoothlemma}.
\qquad\end{proof}

The following Proposition generalizes slightly Proposition \ref{caseq}.
\begin{proposition}\label{caseq2}
Let $\Var{S} = \Pj\C^{9}
\ttimes 
\Pj\C^{9} \ttimes \Pj\C^{9}$ embedded in $\Pj\C^{729}$. The common zero locus of the minors of size $r{{8}\choose 4}+1=70r+1$ of $A^i_{ \tensor{A}}$, for $i=1, 2, 3$,
 contains $\Sec{r}{\Var{S}}$ as irreducible component for $r\le 16$.
\end{proposition}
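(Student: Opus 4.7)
The plan is to mimic the proof of \refprop{caseq}, replacing \reflem{smoothlemma} by its three-directional strengthening stated just above this proposition. Concretely, for each value $r\in\{1,2,\ldots,16\}$ we will pick a sufficiently general point $\tensor{A}_0=\sum_{i=1}^r \sten{a}{i}{1}\ktimes \sten{a}{i}{2}\ktimes \sten{a}{i}{3}$ on $\Sec{r}{\Var{S}}$ and verify, by an explicit rank computation, that the image of the linear map in \refeqn{eq:normalspace2} has dimension equal to the codimension $729-25r$ of $\Sec{r}{\Var{S}}$ in $\Pj\C^{729}$. Once this is established at one point of $\Sec{r}{\Var{S}}$, it holds at a general point, so the image of \refeqn{eq:normalspace2} coincides with the conormal space to $\Sec{r}{\Var{S}}$ at $\tensor{A}_0$; since this image is contained in the conormal space to the variety cut by the $(70r+1)$-minors of $A^1_{\tensor{A}},A^2_{\tensor{A}},A^3_{\tensor{A}}$, the two tangent spaces agree at $\tensor{A}_0$, and $\Sec{r}{\Var{S}}$ appears as an irreducible component of the common zero locus.

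The computational step proceeds as follows. For each $i\in\{1,2,3\}$ and the choice $p_i=4$, I would assemble the Young flattening $A^i_{\tensor{A}_0}\colon \C^{9}\otimes \wedge^{4}\C^{9}\to \C^{9}\otimes\wedge^{5}\C^{9}$, an explicit $1134\times 1134$ matrix whose entries are linear in the slices of $\tensor{A}_0$. For $r\le 16$ we have $70r\le 1120\le 1134$, so by genericity of $\tensor{A}_0$ the kernel and the orthogonal complement of the image of $A^i_{\tensor{A}_0}$ have dimension $1134-70r$ each. Bases for these two spaces are then tensored and reinterpreted, via the canonical identifications $\wedge^{p+1}{\C^{9}}^*\otimes\wedge^{p}\C^{9}\to{\C^{9}}^*$ used in the proof of \reflem{smoothlemma}, as vectors in $\C^{9}\otimes\C^{9}\otimes\C^{9}$. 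Stacking the three blocks yields an explicit matrix of size $729\times 3(1134-70r)^2$ whose rank must be precisely $729-25r$.

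To control arithmetic complexity, the rank test is performed over a finite field $\Z_q$ with a suitable prime $q$, exactly as in \refalg{alg_generic_uniqueness}; a maximal rank over $\Z_q$ specializes from maximal rank over $\C$, and thus certifies the claim. The main obstacle is genuinely computational rather than conceptual: one must run this verification for every $r\in\{1,\ldots,16\}$, and the largest instance involves matrices with roughly $5\cdot 10^{5}$ entries. No new geometric input beyond \refprop{caseq} and the generalized smoothness lemma is required, since Young flattenings from all three directions jointly cut out enough equations to capture $\Sec{r}{\Var{S}}$ in the slightly larger range $r\le 16$, whereas a single direction only sufficed in \reftab{tab_specific_identifiability_range} for $r\le 15$.
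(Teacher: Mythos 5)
Your proposal is correct and follows essentially the same route as the paper: pick a random point of $\Sec{r}{\Var{S}}$, compute the image of the map in \refeqn{eq:normalspace2} built from the three Young flattenings with $p_i=4$, and check that its dimension equals the codimension $329$ of $\sigma_{16}(\Var{S})$, so that the secant variety is an irreducible component of the common determinantal locus. The only (immaterial) differences are that you rerun the check for every $r\le 16$ rather than noting that $r\le 15$ already follows from \refprop{caseq}, and that the paper additionally remarks that two of the three flattening directions already suffice to reach codimension $329$.
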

\begin{proof}
We can pick a random point in $\Sec{r}{\Var{S}}$ and compute the tangent space at that point of the common zero locus of the minors of size 
$70r+1$ of $A^i_{ \tensor{A}}$, according to (\ref{eq:normalspace2}). The codimension of this tangent space is $329$, which coincides with the codimension of  $\Sec{r}{\Var{S}}$.
We remark that, in this case, the codimension of the tangent spaces of the zero locus of the minors of size 
$70r+1$ of each individual $A^i_{ \tensor{A}}$ is $196$. By intersecting two individual tangent spaces (for example for $i=1, 2$), we get a linear subspace which
already has the desired
codimension $329$.
\qquad\end{proof}

\subsection{The algorithm at work for a specific tensor} \label{sec_specific_example}
Exploiting the equations for the $r$-secant variety presented in the previous subsection, we can now apply the algorithm for specific identifiability to some particular cases. Let $\tensor{A} = \sum_{i=1}^{r} \sten{a}{i}{1} \ktimes \sten{a}{i}{2} \ktimes \sten{a}{i}{3}$ a given decomposition for $(n_1, n_2, n_3)$, $r$, in a case appearing in \refprop{caseq}. Then, we can hope to apply our criterion for specific identifiability.

\begin{example}
We consider the following rank $7$ tensor $\tensor{A} \in \C^5\otimes\C^5\otimes\C^5$:
\begin{align}\label{555r7}
\tensor{A} = \begin{bmatrix} 1 \\ 1 \\ 1 \\ 1 \\ 1 \end{bmatrix} \otimes \begin{bmatrix} 1 \\ 2 \\3 \\4 \\5 \end{bmatrix} \otimes  \begin{bmatrix} 1 \\ 5 \\ 7 \\ -5 \\ -7 \end{bmatrix} 
\;+\; 
\begin{bmatrix} 4 \\ 3 \\ 2 \\ -1 \\ -2 \end{bmatrix} \otimes \begin{bmatrix} 11 \\ 13 \\ 12 \\ 15 \\ 14 \end{bmatrix} \otimes \begin{bmatrix} -2 \\ 6 \\ 5 \\ -3 \\ 6 \end{bmatrix} 
\;+\;
\sum_{i=1}^5 \vect{e}_i \otimes \vect{e}_i \otimes \vect{e}_i,
\end{align} 
with $\vect{e}_i$ the $i$th standard basis vector in $\C^5$.
This example can be studied neither with Kruskal's criterion nor with Domanov--De Lathauwer's condition, as we learn from \reftab{tab_specific_identifiability_range}. 
We show that the decomposition \refeqn{555r7} is unique.
Let $\Var{S}=\Pj\C^5\times\Pj\C^5\times\Pj\C^5$.
We compute the map $A_\tensor{A}\colon\C^5\otimes\wedge^2\C^5\to\C^5\otimes\wedge^3\C^5$
which has rank $42$.
Hence the subspaces $\ker A_\tensor{A}$ and $\left(\mathrm{Im\ }A_{\tensor{A}}\right)^\perp$
have both dimension $8$.
We compute the image of $\left(\ker A_\tensor{A}\right)\otimes \left(\mathrm{Im\ }A_{\tensor{A}}\right)^\perp$ in $\C^5\otimes\C^5\otimes\C^5$, which has codimension $34$,
this image is the normal space to $\Sec{7}{X}$ at the point corresponding to $\tensor{A}$.
It follows that $\Sec{7}{X}$ is smooth at the point corresponding to $\tensor{A}$. In the ancillary files, we included a Macaulay2 script for verifying this computation.
Then, we may apply \refalg{alg_generic_uniqueness} with the only change that step S1 is replaced by
the decomposition \refeqn{555r7} and $r=7$. The algorithm runs, getting the matrix
$T$ in step S2 of size $ 125\times 105$. The null space matrix $K^T$ of step S4 has size
$34\times 91$. Note that $l=\ell=34$. Steps S6 and S7b should be performed for each of the seven points. In step S6, we construct $34$ Hessian matrices of size $12\times 12$. In step S7b, the stacked Hessian $H$ has size $12\times 408$.
Its rank is $12$, for each of the seven points, hence concluding the proof.
\end{example}

\section{Conclusions} \label{sec_conclusions}
We presented a sufficient condition for generic $r$-identifiability along with an algorithm verifying it. Using this algorithm, we proved that in all spaces of dimension less than $15000$, except for the known exceptions, tensors of subgeneric rank are generically $r$-identifiable. Thereafter, we extended the sufficient condition to the case of specific $r$-identifiability, and demonstrated that our algorithm still works, provided that the specific rank-$r$ decomposition can be shown to correspond to a nonsingular point of the $r$-secant variety. Using some local equations for this variety, we were able to prove the identifiability of a specific tensor, whose identifiability could not be investigated using the criterions of Kruskal and Domanov--De Lathauwer. 

The contribution of this work is twofold: first, we showed that in spaces of practical size generic $r$-identifiability holds, so that a ``random'' tensor in such spaces admits a unique rank decomposition. Second, a novel promising direction for investigating specific identifiability was presented: the proposed criterion can, in principle, verify specific identifiability up to the optimal rank value, provided that a good test for nonsingularity of points on secant varieties of Segre varieties can be designed.

Unfortunately, little is known about the singularities of secant varieties of Segre varieties. As a consequence, our results concerning specific identifiability currently can only slightly improve the range of feasible cases with respect to Kruskal's and Domanov--De Lathauwer's conditions. However, the approach outlined here can, in contrast, be applied up to the optimal rank value, and will benefit from advances made in the characterization of equations and singularities of the $r$-secant variety. This study is, nevertheless, well beyond the scope of this paper, and will require advances in the state-of-the-art in algebraic geometry.

 \section*{Acknowledgements}
 The first two authors are members of italian GNSAGA-INDAM.
 The third author was supported by a Ph.D.~Fellowship of the Research Foun\-da\-tion--Flanders (FWO).

\bibliographystyle{amsplain}
\bibliography{shortstrings,IDFT}

\vskip 0.5cm

\footnotesize {\bf Authors' addresses:}

\noindent Luca Chiantini, Dipartimento di Ingegneria dell'Informazione e Scienze Matematiche,\\ Universit\`a di Siena, Italy, {\tt luca.chiantini@unisi.it}
\vskip 0.15cm
\noindent Giorgio Ottaviani, Dipartimento di Matematica e Informatica ``U. Dini'',\\Universit\`a di Firenze,
Italy, {\tt ottavian@math.unifi.it}
\vskip 0.15cm
\noindent Nick Vannieuwenhoven,  Department of Computer Science,\\ KU Leuven, Leuven, Belgium,
 {\tt nick.vannieuwenhoven@cs.kuleuven.be}

\end{document}